\theoremstyle{plain}%
  \newtheorem{theorem}{Theorem}[section]
  \newtheorem{corollary}{Corollary}[section]
  \newtheorem{proposition}{Proposition}[section]
  \newtheorem{lemma}{Lemma}[section]
  \newtheorem{Definition}{Definition}[section]
\theoremstyle{remark}
\theoremstyle{definition}
\newfont{\hueca}{msbm10}
\def\hu #1{\hbox{\hueca #1}}\def\hu #1{\hbox{\hueca #1}}
 \font\got=eufm10 \font\gots=eufm10 at
\def\g2{ \hbox{\got g}_2}
\def\f4{\hbox{\got f}_4}
\def\e6{\hbox{\got e}_6}
\def\fs4{\hbox{\gots f}_4}
\def\F4{\hbox{\got F}_4}
\def\Fs4{\hbox{\gots F}_4}
\def\C{\mathbb C}
\def\GL{ {\rm GL}}
\def\al{\ifcase\xypolynode\or F \or A\or B\or C\or D\or G\fi}
\def\ala{\ifcase\xypolynode\or a \or b\or c\or d\or g\or f\fi}
\begin{document}

\title{On the Structure of Graded Lie Superalgebras
}
\author{Antonio J. Calder\'{o}n Mart\'{\i }n
\thanks{Supported by the PCI of the UCA `Teor\'\i a de Lie y Teor\'\i a de Espacios de
Banach', by the PAI with project numbers FQM298, FQM2467, FQM3737
and by the project of the Spanish Ministerio de Educaci\'on y
Ciencia MTM2007-60333.}\\
Jos\'{e} M. S\'{a}nchez Delgado \\
Departamento de Matem\'{a}ticas. \\
Universidad de C\'{a}diz. 11510 Puerto Real, C\'{a}diz, Spain.\\
e-mail: ajesus.calderon@uca.es\\
e-mail: josemaria.sanchezdelgado@alum.uca.es }
\date{}
\maketitle

\begin{abstract}
We study the structure of graded Lie superalgebras with arbitrary
dimension and over an arbitrary field ${\hu K}$. We show that any
of such algebras ${\frak L}$ with a symmetric $G$-support is of
the form ${\frak L} = U + \sum\limits_{j}I_{j}$ with $U$ a
subspace of ${\frak L}_1$ and any $I_{j}$ a well described graded
ideal of ${\frak L}$, satisfying $[I_j,I_k] = 0$ if $j\neq k$.
Under certain conditions, it is shown that ${\frak L} =
(\bigoplus\limits_{k \in K} I_k) \oplus (\bigoplus\limits_{q \in
Q} I_q),$ where any $I_k$ is a gr-simple graded ideal of ${\frak
L}$ and any $I_q$ a completely determined low dimensional non
gr-simple graded ideal of ${\frak L}$, satisfying $[I_q,I_{q'}] =
0$ for any $q'\in Q$ with $q \neq q'$.

{\it Keywords}: Graded Lie superalgebras, Infinite dimensional Lie
superalgebras, Structure Theory.

{\it 2000 MSC}: 17B70, 17B65,  17B05.
\end{abstract}

\section{Introduction and previous definitions}

Throughout this paper, Lie superalgebras ${\frak L}$ are
considered of arbitrary dimension and over an arbitrary field
${\hu K}$. It is worth to mention that, unless otherwise stated,
there is not any restriction on $\dim {\frak L}_g$ or the products
$[{\frak L}_g,{\frak L}_{g^{-1}}]$, where ${\frak L}_g$ denotes
the homogeneous subspace associated to $g \in G$. Lie
superalgebras plays an important role in theoretical physics,
specially in conformal field theory and supersymmetries (see
\cite{JMP1, JMP2, JMP3} for recent references). The notion of
supersymmetry reflects the known symmetry between bosons and
fermions, being the mathematical structure formalizing this idea
the one of supergroup, or ${\hu Z}_2$-graded Lie group. As
mentioned in \cite{Cand11}, its job is that of modelling
continuous supersymmetry transformations between bosons and
fermions. As Lie algebras consist of generators of Lie groups, the
infinitesimal Lie group elements tangent to the identity, so ${\hu
Z}_2$--graded Lie algebras, otherwise known as Lie superalgebras,
consist of generators of (or infinitesimal) supersymmetry
transformations. We also refers to    \cite{bena} and \cite{Gie}
for more interesting applications of Lie superalgebras. The
interest on gradings on simple Lie algebras has been remarkable in
the last years. The gradings of finite dimensional simple Lie
algebras, ruling out $\frak a_l$, $\frak d_4$ and the exceptional
cases, are described in \cite{Ivan2}. The fine gradings on $\frak
a_l$ have been determined in \cite{LGII} solving the related
problem of finding maximal abelian groups of diagonalizable
automorphisms of the algebras (not only in $\GL(n,\C)$ but also in
${{\rm O}}(n,\C)$ for $n\ne 8$ and ${{\rm SP}}(2n,\C)$). See also
\cite{feng, Phys1,Phys3, Phys4}. The first studies of gradings on
exceptional Lie algebras are \cite{otrog2}, \cite{g2} and
\cite{f4}, which describe the group gradings on $\g2$ and $\f4$.
The study of the gradings of the real forms of complex Lie
algebras begins in \cite{LGIII}, where are considered the gradings
on the real forms of classical simple complex Lie algebras. The
description of the fine gradings of the real forms of the
exceptional simple Lie algebras ${\frak f}_4$ and ${\frak g}_2$
are obtained in \cite{Reales}. Respect to the group gradings on
superalgebras, these have been considered, for the case of the
Jordan superalgebra $K_{10}$, in \cite{Kac}.

In the present paper we begin the study of arbitrary graded Lie
superalgebras, (not necessarily simple or finite-dimensional), and
over an arbitrary field ${\hu K}$ by focussing on their structure.
In $\S2$ we extend the techniques of connections in the support of
$G$ developed for graded Lie algebras in \cite{graalg} to the
framework of graded Lie superalgebras ${\frak L}$, so as to show
that ${\frak L}$ is of the form ${\frak L} = U + \sum\limits_jI_j$
with $U$ a subspace of ${\frak L}_1$ and any $I_j$ a well
described graded ideal of ${\frak L}$, satisfying $[I_j,I_k] = 0$
if $j \neq k$. In $\S3$, and under certain conditions, it is shown
that ${\frak L} =(\bigoplus\limits_{k \in K} I_k) \oplus
(\bigoplus\limits_{q \in Q} I_q),$ where any $I_k$ is a gr-simple
graded ideal of ${\frak L}$ and any $I_q$ a completely determined
low dimensional non gr-simple graded ideal of ${\frak L}$,
satisfying $[I_q,I_{q'}] = 0$ for any $q' \in Q$ with $q \neq q'$.

\medskip

\medskip

A {\it Lie superalgebra} ${\frak L}$ is a ${\hu Z}_2$-graded
algebra ${\frak L} = {\frak L}^{\bar 0} \oplus {\frak L}^{\bar 1}$
over an arbitrary ground field ${\hu K}$, endowed with a bilinear
product $[\cdot, \cdot]$, whose homogenous elements $x \in {\frak
L}^{\bar i}, y \in {\frak L}^{\bar j}, {\bar i}, {\bar j} \in {\hu
Z}_2$, satisfy
$$[x,y] \subset {\frak L}^{{\bar i}+{\bar j}}$$
$$[x,y] = -(-1)^{{\bar i}{\bar j}}[y,x] \hbox{ (Skew-supersymmetry)}$$
$$[x,[y,z]] = [[x,y],z] + (-1)^{{\bar i}{\bar j}}[y,[x,z]] \hbox{ (Super Jacobi
identity)}$$
for any homogenous element $z \in {\frak L}^{\bar k}$,
${\bar k} \in {\hu Z}_2$.

Note that if ${\rm char}({\hu K}) \neq 2$, then ${\frak L}^{\bar
0}$ is a Lie algebra called the even or bosonic part of ${\frak
L}$ while ${\frak L}^{\bar 1}$ is called the odd or fermonic part
of ${\frak L}$.

\medskip

%Let us introduce the class of graded algebras in the framework of
%Lie superalgebras.
The term {\it grading} will always mean abelian group grading
compatible with the ${\hu Z}_2$-graduation providing the
superalgebra structure of ${\frak L}$. That is, a decomposition in
vector subspaces $${\frak L} = \bigoplus\limits_{g \in G}{\frak
L}_g$$ where $G$ is an abelian group, and the homogeneous
subspaces satisfy
\begin{equation}\label{separa}
\hbox{${\frak L}_g = {\frak L}_g^{\bar 0} \oplus {\frak L}_g^{\bar
1}$ with ${\frak L}_g^{\bar i} = {\frak L}_g \cap {\frak L}^{\bar
i}, {\bar i} \in {\hu Z}_2$,}
\end{equation}
and $$[{\frak L}_g,{\frak L}_{g'}] \subset {\frak L}_{gg'},$$
(denoting by juxtaposition the product in $G$). We note that a
$G$-grading of ${\frak L}$ provides a refinement of the initial
${\hu Z}_2$-grading of ${\frak L}$ (see \cite[Definition
3.1.4]{Koche}), and that split Lie superalgebras and graded Lie
algebras are examples of graded Lie superalgebras. Hence, the
present paper extends the results in \cite{graalg, Nosalg}.

We call the {\it $G$-support} of the grading to the set
$$\Sigma_G := \{g \in G \setminus \{1\} : {\frak L}_g \neq 0\}.$$
So we can write  $$\hbox{${\frak L} = \bigoplus\limits_{g \in
G}({\frak L}_g^{\bar 0} \oplus {\frak L}_g^{\bar 1}) = ({\frak
L}_1^{\bar 0} \oplus {\frak L}_1^{\bar 1}) \oplus
(\bigoplus\limits_{g \in \Sigma_G}({\frak L}_g^{\bar 0} \oplus
{\frak L}_g^{\bar 1}))$.}$$
We also denote by $\Sigma_G^{\bar 0} :=
\{g \in \Sigma_G : {\frak L}_g^{\bar 0} \neq 0\}$ and by
$\Sigma_G^{\bar 1} := \{g \in \Sigma_G : {\frak L}_g^{\bar 1} \neq
0\}$. So $\Sigma_G = \Sigma_G^{\bar 0} \cup \Sigma_G^{\bar 1}$,
being a non necessarily disjoint union.

The $G$-support $\Sigma_G$ is called {\it symmetric} if $g \in
\Sigma_G^{\bar i}$ implies $g^{-1} \in \Sigma_G^{\bar i}, {\bar i}
\in {\hu Z}_2$.

\medskip

The usual regularity concepts will be understood in the graded
sense, (compatible with the initial ${\hu Z}_2$-graduation of
${\frak L}$). That is, a {\it graded ideal} $I$ of ${\frak L}$ is
an ideal which splits as
\begin{equation}\label{idealpartio}
\hbox{$I = \bigoplus\limits_{g \in G}I_g = \bigoplus\limits_{g \in
G}(I_g^{\bar 0} \oplus I_g^{\bar 1})$ with any $I_g^{\bar i} = I_g
\cap {\frak L}^{\bar i}, {\bar i} \in {\hu Z}_2$.}
\end{equation}
A graded Lie superalgebra ${\frak L}$ will be called {\it
gr-simple} if $[{\frak L},{\frak L}] \neq 0$ and its only graded
ideals are $\{0\}$ and ${\frak L}$.
%; and ${\frak L}$ will be said {\it
%gr-prime} if $[I, J] = 0$, with $I, J$ graded ideals, implies
%either $I = 0$ or $J = 0$.

Observe that from the grading of  ${\frak L}$ and equation
(\ref{separa}) we get
$$[{\frak L}_g^{\bar i}, {\frak L}_{g'}^{\bar j}] \subset
{\frak L}_{gg'}^{{\bar i}+{\bar j}}$$ for any ${\bar i}, {\bar j}
\in {\hu Z}_2$.

%\begin{Definition}
%The $G$-support of the grading is called {\it symmetric} if $g \in
%\Sigma_G$ implies $g^{-1} \in \Sigma_G.$
%\end{Definition}

\section{Connections in $\Sigma_G$. Decompositions}

From now on and throughout the paper, ${\frak L}$ denotes a graded
Lie superalgebra with a symmetric $G$-support $\Sigma_G$, and
$${\frak L} = \bigoplus\limits_{g \in G}({\frak L}_g^{\bar 0}
\oplus {\frak L}_g^{\bar 1}) = ({\frak L}_1^{\bar 0} \oplus {\frak
L}_1^{\bar 1}) \oplus (\bigoplus\limits_{g \in \Sigma_G}({\frak
L}_g^{\bar 0} \oplus {\frak L}_g^{\bar 1}))$$ the corresponding
grading. We begin by developing connection techniques in this
framework.

\begin{Definition}
Let $g$ and $g'$ be two elements in $\Sigma_G$. We shall say that
$g$ is {\em $\Sigma_G$-connected} to $g'$ if there exist
$g_1,g_2...,g_n \in \Sigma_G$ such that
\begin{enumerate}
\item[{\rm 1.}] $g_1 = g$.
\item[{\rm 2.}]
$\{g_1,g_1g_2,......,g_1g_2 \cdots g_{n-1}\} \subset \Sigma_G.$
\item[{\rm 3.}] $g_1g_2 \cdots g_n \in \{g',{(g')}^{-1}\}$.
\end{enumerate}
We also say that $\{g_1,...,g_n\}$ is a {\em
$\Sigma_G$-connection} from $g$ to $g'$.
\end{Definition}

The next result shows the $\Sigma_G$-connection relation is of
equivalence. Its proof is virtually identical to the proof of
\cite[Proposition 2.1]{graalg} but for completeness reasons we add
an sketch of the same.

\begin{proposition}\label{pro1}
The relation $\sim$ in $\Sigma_G$, defined by $g \sim g'$ if and
only if $g$ is $\Sigma_G$-connected to $g$, is of equivalence.
\end{proposition}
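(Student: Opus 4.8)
The plan is to verify the three defining properties of an equivalence relation: reflexivity, symmetry and transitivity. Two structural facts will do all the work: the grading group $G$ is abelian, and the support $\Sigma_G$ is symmetric, so that $h \in \Sigma_G$ forces $h^{-1}\in\Sigma_G$. For a connection $\{g_1,\ldots,g_n\}$ I write $p_k := g_1g_2\cdots g_k$ for its partial products, so that condition~2 reads $p_1,\ldots,p_{n-1}\in\Sigma_G$ (with $p_1=g\in\Sigma_G$) and condition~3 reads $p_n\in\{g',(g')^{-1}\}$. \textbf{Reflexivity} is immediate: for $g\in\Sigma_G$ I take $n=1$ and $g_1=g$; then the set of partial products in condition~2 is empty, hence trivially inside $\Sigma_G$, while condition~3 holds since $g_1=g\in\{g,g^{-1}\}$. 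Thus $g\sim g$.

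\textbf{Symmetry.} Suppose $\{g_1,\ldots,g_n\}$ connects $g$ to $g'$ and fix $\epsilon\in\{1,-1\}$ with $p_n=(g')^{\epsilon}$. I would construct a connection starting at $g'$ by running the chain of partial products backwards. If $\epsilon=1$, I take $g'_1=g'=p_n$ and $g'_k=g_{n-k+2}^{-1}$ for $2\le k\le n$; the products telescope on the right to $g'_1\cdots g'_k=p_{n-k+1}$, so the partial products are $p_n=g',p_{n-1},\ldots,p_2\in\Sigma_G$ and the final product is $p_1=g$. If $\epsilon=-1$, I instead take $g'_1=g'=p_n^{-1}$ and $g'_k=g_{n-k+2}$ for $2\le k\le n$; here commutativity of $G$ gives $g'_1\cdots g'_k=p_{n-k+1}^{-1}$, whose terms $g',p_{n-1}^{-1},\ldots,p_2^{-1}$ lie in $\Sigma_G$ by symmetry of the support, and the final product is $p_1^{-1}=g^{-1}\in\{g,g^{-1}\}$. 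In both cases every step lies in $\Sigma_G$ (again by symmetry), so $g'\sim g$.

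\textbf{Transitivity.} Suppose $\{g_1,\ldots,g_n\}$ connects $g$ to $g'$ and $\{h_1,\ldots,h_m\}$ connects $g'$ to $g''$, and set $r_j:=h_1\cdots h_j$, so that $r_1=h_1=g'$, $r_1,\ldots,r_{m-1}\in\Sigma_G$ and $r_m=(g'')^{\epsilon_2}$. I concatenate the two chains, splitting on the sign $\epsilon_1$ with $p_n=(g')^{\epsilon_1}$. If $\epsilon_1=1$ then $p_n=g'=r_1$, and the sequence $\{g_1,\ldots,g_n,h_2,\ldots,h_m\}$ has partial products $p_1,\ldots,p_{n-1},r_1,\ldots,r_{m-1}\in\Sigma_G$ and final product $r_m$, giving a connection from $g$ to $g''$. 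If $\epsilon_1=-1$ then $p_n=(g')^{-1}=r_1^{-1}$, and I use $\{g_1,\ldots,g_n,h_2^{-1},\ldots,h_m^{-1}\}$; commutativity yields partial products $p_1,\ldots,p_{n-1},r_1^{-1},\ldots,r_{m-1}^{-1}$, all in $\Sigma_G$ by symmetry of the support, and final product $r_m^{-1}=(g'')^{-\epsilon_2}\in\{g'',(g'')^{-1}\}$. Hence $g\sim g''$.

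The only genuine subtlety is the bookkeeping forced by condition~3, which pins down the endpoint of a connection only up to inversion; this is precisely where the hypotheses that $\Sigma_G$ is symmetric and that $G$ is abelian are needed, since together they guarantee that the reversed or inverted partial products stay inside $\Sigma_G$ and still telescope. Once these two ingredients are isolated, the argument reduces to the routine chain manipulations sketched above, which is why the proof is essentially that of \cite[Proposition 2.1]{graalg}.
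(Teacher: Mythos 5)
Your proof is correct and follows essentially the same route as the paper: reflexivity via the singleton connection, symmetry by reversing the chain with $\{g',g_n^{-1},\ldots,g_2^{-1}\}$ (resp.\ $\{g',g_n,\ldots,g_2\}$ when the endpoint is $(g')^{-1}$), and transitivity by concatenation, possibly inverting the second chain. The only cosmetic differences are your explicit partial-product bookkeeping and that you absorb the paper's separate $m=1$ case of transitivity into the general formula as a degenerate (empty) concatenation, which works.
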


\begin{proof}
$\{g\}$ is a $\Sigma_G$-connection from $g$ to itself and
therefore $g \sim g$.

If $g \sim g'$ and $\{g_1,..., g_n\}$ is a $\Sigma_G$-connection
from $g$ to $g'$, then $$\{g_1 \cdots g_n, g_n^{-1}, g_{n-1}^{-1},
..., g_2^{-1}\}$$ is a $\Sigma_G$-connection from $g'$ to $g$ in
case $g_1 \cdots g_n = g'$, and $$\{g_1^{-1} \cdots g_n^{-1}, g_n,
g_{n-1},...,g_2\}$$ in case $g_1 \cdots g_n = (g')^{-1}$.
Therefore $g' \sim g$.

Finally, suppose $g \sim g'$ and $g' \sim g''$, and write
$\{g_1,..., g_n\}$ for a $\Sigma_G$-connection from $g$ to $g'$
and $\{{g'}_1,..., {g'}_m\}$  for a $\Sigma_G$-connection from
$g'$ to $g''$. If $m > 1$, then
$\{g_1,...,g_n,{g'}_2,...,{g'}_m\}$ is a $\Sigma_G$-connection
from $g$ to $g''$ in case $g_1 \cdots g_n = g'$, and
$\{g_1,...,g_n,{g'}_2^{-1},...,{g'}_m^{-1}\}$ in case $g_1 \cdots
g_n = (g')^{-1}$. If $m = 1$, then $g'' \in \{g', (g')^{-1}\}$ and
so $\{g_1,..., g_n\}$ is a $\Sigma_G$-connection from $g$ to
$g''$. Therefore $g \sim g''$ and $\sim$ is of equivalence.
\end{proof}

Given $g \in \Sigma_g$, we denote by
\[
{\frak C}_{g}:=\{g' \in \Sigma_G : g \hspace{0.1cm} {\rm is}
\hspace{0.1cm} \Sigma_G{\rm-connected} \hspace{0.1cm} {\rm to} \hspace{0.1cm} g'%
\}.
\]

Clearly if $g' \in {\frak C}_g$ then ${(g')}^{-1} \in {\frak C}_g$
and, by Proposition \ref{pro1}, if $h \notin {\frak C}_g$ then
${\frak C}_g \cap {\frak C}_h = \emptyset$.

\begin{lemma}\label{C_g cerrado}
If  $g' \in {\frak C}_g$ and  $g'', g'g'' \in \Sigma_G$, then
$g'', g'g'' \in {\frak C}_g$.
\end{lemma}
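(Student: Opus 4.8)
The plan is to prove Lemma 1.1 (the statement labeled \texttt{C\_g cerrado}): given $g' \in {\frak C}_g$ and $g'', g'g'' \in \Sigma_G$, we must exhibit $\Sigma_G$-connections from $g$ to $g''$ and from $g$ to $g'g''$. Since $g' \in {\frak C}_g$ means $g$ is $\Sigma_G$-connected to $g'$, I would start by fixing a $\Sigma_G$-connection $\{g_1, \dots, g_n\}$ from $g$ to $g'$, so that $g_1 = g$, all partial products $g_1 g_2 \cdots g_k$ for $1 \le k \le n-1$ lie in $\Sigma_G$, and $g_1 g_2 \cdots g_n \in \{g', (g')^{-1}\}$. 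The goal is to append the element $g''$ (and separately $g'g''$) to this connecting chain so as to reach the desired targets, while verifying that all the required intermediate partial products remain in $\Sigma_G$.

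The core idea is a case split on whether the connection terminates at $g'$ or at $(g')^{-1}$. In the first case, where $g_1 \cdots g_n = g'$, I would propose the chain $\{g_1, \dots, g_n, g''\}$ as a connection from $g$ to $g'g''$: condition 1 holds since $g_1 = g$; condition 2 requires all partial products up to the second-to-last to lie in $\Sigma_G$, and the only new partial product is $g_1 \cdots g_n = g' \in \Sigma_G$, which holds by hypothesis; condition 3 gives $g_1 \cdots g_n g'' = g' g'' \in \{g'g'', (g'g'')^{-1}\}$ as required by our second hypothesis that $g'g'' \in \Sigma_G$. Thus $g'g'' \in {\frak C}_g$. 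To obtain $g'' \in {\frak C}_g$ I would instead append $(g')^{-1} g''$; but a cleaner route is to observe that connecting $g$ to $g'$ and then noting $g''= (g')^{-1}(g'g'')$, so I would attach the two-step tail $\{(g')^{-1}, \dots\}$ — here I must be careful that the intermediate product $g_1 \cdots g_n (g')^{-1} = g' (g')^{-1} = 1 \notin \Sigma_G$, which breaks condition 2.

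This last observation is exactly where the main obstacle lies, and it is worth flagging. The naive concatenation to reach $g''$ fails because the partial product collapses to the identity $1$, which is excluded from $\Sigma_G$ by definition of the support. The correct strategy is therefore to route through $g'g''$ first: having established that $g'g'' \in {\frak C}_g$ via a connection, say $\{h_1, \dots, h_m\}$ with $h_1 = g$ and final product equal to $g'g''$ or its inverse, I can then re-express $g'' = (g')^{-1}(g'g'')$ and use the fact that ${\frak C}_g$ is closed under the equivalence $\sim$ (Proposition 1.1) together with the symmetry $g' \leftrightarrow (g')^{-1}$ already noted after the definition of ${\frak C}_g$. Concretely, since $g'g'' \in {\frak C}_g$ and $g' \in {\frak C}_g$, and the relation $\sim$ is an equivalence, it suffices to produce a direct connection: I would take the connection to $g'g''$ and then append $(g')^{-1}$ as the final multiplier, checking that the newly introduced partial product (the old final product $g'g''$ or $(g'g'')^{-1}$) lies in $\Sigma_G$ — which it does — so that condition 2 is preserved and condition 3 yields $g'g''(g')^{-1}$ or its inverse; using commutativity of $G$ this equals $g''$ up to inverse, giving $g'' \in {\frak C}_g$.

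In carrying this out I would treat the two termination cases ($g_1\cdots g_n = g'$ versus $=(g')^{-1}$) uniformly by exploiting that $G$ is abelian and that membership in ${\frak C}_g$ is invariant under taking inverses, which lets me reduce the second case to the first without duplicating the verification. The verification itself is entirely a matter of checking conditions 1--3 of the definition against the hypotheses $g', g'', g'g'' \in \Sigma_G$; no deep structural input about ${\frak L}$ is needed, only the combinatorics of the support. The essential subtlety to get right is the bookkeeping of partial products so as never to land on $1$, and the final appeal to abelianness of $G$ to identify $g'g''(g')^{-1}$ with $g''$.
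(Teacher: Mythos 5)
Your proposal is correct and takes essentially the same route as the paper: both reach $g'g''$ by multiplying a connection ending at $g'$ by $g''$, then reach $g''$ by multiplying $g'g''$ by $(g')^{-1}$ (the paper does this via the two-element connections $\{g',g''\}$ and $\{g'g'',(g')^{-1}\}$ together with transitivity of $\sim$ from Proposition \ref{pro1}, while you inline that transitivity by concatenating chains), and both avoid exactly the pitfall you flag, namely the degenerate partial product $g'(g')^{-1}=1$. The only blemish is your parenthetical claim that appending $(g')^{-1}$ yields ``$g'g''(g')^{-1}$ or its inverse'': if the chain terminates at $(g'g'')^{-1}$, that product is $(g')^{-2}(g'')^{-1}\notin\{g'',(g'')^{-1}\}$ and one must append $g'$ instead, but since this is precisely the case bookkeeping your appeal to the symmetry of $\Sigma_G$ and the inverse-invariance of ${\frak C}_g$ is meant to absorb, it is a slip of wording rather than a genuine gap.
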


\begin{proof}
The $\Sigma_G$-connection $\{g',g''\}$ gives us $g' \sim g'g''$.
Hence,  by the transitivity  of $\sim$, we finally get $g'g'' \in
{\frak C}_g$. To verify $g'' \in {\frak C}_g$, observe that
$\{g'g'', (g')^{-1}\}$ is a $\Sigma_G$-connection from $g'g''$ to
$g''$. Now, taking into account $g'g'' \in {\frak C}_g$, we
conclude as above that $g'' \in {\frak C}_g$.
\end{proof}

Our next goal is to associate an (adequate) graded ideal $I_{[g]}$
to any ${\frak C}_g$. For ${\frak C}_g$, $g \in \Sigma_G$, we
define

$${\frak L}_{{\frak C}_g,1} := span_{\hu
K}\{[{\frak L}_{g'},{\frak L}_{(g')^{-1}}] : g' \in {\frak C}_g\}
=$$
\begin{equation}\label{suma16}
(\sum\limits_{g' \in {\frak C}_g}([{\frak L}_{g'}^{\bar 0},{\frak
L}_{(g')^{-1}}^{\bar 0}] + [{\frak L}_{g'}^{\bar 1},{\frak
L}_{(g')^{-1}}^{\bar 1}]) \oplus (\sum\limits_{g' \in {\frak
C}_g}[{\frak L}_{g'}^{\bar 0},{\frak L}_{(g')^{-1}}^{\bar 1}] +
[{\frak L}_{g'}^{\bar 1},{\frak L}_{(g')^{-1}}^{\bar 0}]))
\end{equation}
$$ \subset {\frak L}_1^{\bar 0} \oplus {\frak L}_1^{\bar 1},$$ last equality
being consequence of equation (\ref{separa}); and
$$V_{{\frak C}_g} := \bigoplus\limits_{g' \in {\frak C}_g}{\frak
L}_{g'} = \bigoplus\limits_{g' \in {\frak C}_g}({\frak
L}_{g'}^{\bar 0} \oplus {\frak L}_{g'}^{\bar 1}).$$ We denote by
${\frak L}_{{\frak C}_g}$ the following (graded) subspace of $L$,
$${\frak L}_{{\frak C}_g} := {\frak L}_{{\frak C}_g,1} \oplus
V_{{\frak C}_g}.$$

\begin{proposition}\label{pro2}
Let $g \in \Sigma_G$. Then the following assertions hold.

\begin{enumerate}
\item[{\rm 1.}] $[{\frak L}_{{\frak C}_g},{\frak L}_{{\frak C}_g}]
\subset {\frak L}_{{\frak C}_g}$.

\item[{\rm 2.}]  If $h \notin {\frak C}_g$ then $[{\frak
L}_{{\frak C}_g},{\frak L}_{{\frak C}_h}] = 0$.
\end{enumerate}
\end{proposition}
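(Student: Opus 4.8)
The plan is to expand every bracket according to the decomposition ${\frak L}_{{\frak C}_g} = {\frak L}_{{\frak C}_g,1} \oplus V_{{\frak C}_g}$, keeping in mind that ${\frak L}_{{\frak C}_g,1} \subset {\frak L}_1$, and to treat the resulting pieces with four tools: the grading $[{\frak L}_g,{\frak L}_{g'}] \subset {\frak L}_{gg'}$, the closure property of Lemma \ref{C_g cerrado}, the symmetry of ${\frak C}_g$ (if $g'\in{\frak C}_g$ then $(g')^{-1}\in{\frak C}_g$), and the super Jacobi identity. For assertion 1, I would split $[{\frak L}_{{\frak C}_g},{\frak L}_{{\frak C}_g}]$ into $[V_{{\frak C}_g},V_{{\frak C}_g}]$, $[{\frak L}_{{\frak C}_g,1},V_{{\frak C}_g}]$, and $[{\frak L}_{{\frak C}_g,1},{\frak L}_{{\frak C}_g,1}]$. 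For the first, a bracket $[{\frak L}_{g'},{\frak L}_{g''}]$ with $g',g''\in{\frak C}_g$ lands in ${\frak L}_{g'g''}$; if $g'g''=1$ it is one of the generators of ${\frak L}_{{\frak C}_g,1}$, while if it is nonzero with $g'g''\neq 1$ then $g'g''\in\Sigma_G$ and Lemma \ref{C_g cerrado} forces $g'g''\in{\frak C}_g$, so it lies in $V_{{\frak C}_g}$. For the second piece, ${\frak L}_{{\frak C}_g,1}\subset{\frak L}_1$ gives $[{\frak L}_{{\frak C}_g,1},{\frak L}_{g'}]\subset{\frak L}_{g'}\subset V_{{\frak C}_g}$.

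The one piece of assertion 1 needing the super Jacobi identity is $[{\frak L}_{{\frak C}_g,1},{\frak L}_{{\frak C}_g,1}]$, which I expect to be the main technical point, since a priori it lands only in ${\frak L}_1$, whereas I must land it inside the possibly proper subspace ${\frak L}_{{\frak C}_g,1}$. Writing a generator as $[x_{g'},y_{(g')^{-1}}]$ with $g'\in{\frak C}_g$ and bracketing with $b\in{\frak L}_{{\frak C}_g,1}$, the super Jacobi identity rewrites $[[x_{g'},y_{(g')^{-1}}],b]$ as a linear combination of $[x_{g'},[y_{(g')^{-1}},b]]$ and $[y_{(g')^{-1}},[x_{g'},b]]$. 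Since $b\in{\frak L}_1$, the inner brackets lie in ${\frak L}_{(g')^{-1}}$ and ${\frak L}_{g'}$ respectively, so both terms belong to $[{\frak L}_{g'},{\frak L}_{(g')^{-1}}]+[{\frak L}_{(g')^{-1}},{\frak L}_{g'}]$; as both $g'$ and $(g')^{-1}$ lie in ${\frak C}_g$, this sum is contained in ${\frak L}_{{\frak C}_g,1}$. Thus all three pieces land in ${\frak L}_{{\frak C}_g,1}\oplus V_{{\frak C}_g}={\frak L}_{{\frak C}_g}$.

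For assertion 2 I would again expand $[{\frak L}_{{\frak C}_g},{\frak L}_{{\frak C}_h}]$ into four pieces, the basic one being $[V_{{\frak C}_g},V_{{\frak C}_h}]=0$. Indeed, for $g'\in{\frak C}_g$ and $h'\in{\frak C}_h$, a nonzero $[{\frak L}_{g'},{\frak L}_{h'}]$ would force $g'h'\in\Sigma_G\cup\{1\}$: the case $g'h'=1$ gives $(g')^{-1}=h'\in{\frak C}_g\cap{\frak C}_h$, and the case $g'h'\in\Sigma_G$ gives $h'\in{\frak C}_g$ by Lemma \ref{C_g cerrado}, both contradicting the disjointness ${\frak C}_g\cap{\frak C}_h=\emptyset$ noted after Proposition \ref{pro1}. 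The same argument, applied also to the pair $((g')^{-1},h')$ since $(g')^{-1}\in{\frak C}_g$, shows $[{\frak L}_{(g')^{-1}},{\frak L}_{h'}]=0$ as well.

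It remains to kill the mixed pieces, and here the super Jacobi identity does the work exactly as in assertion 1. For $[{\frak L}_{{\frak C}_g,1},V_{{\frak C}_h}]$, bracketing a generator $[x_{g'},y_{(g')^{-1}}]$ with $z_{h'}\in{\frak L}_{h'}$, $h'\in{\frak C}_h$, the super Jacobi identity reduces it to terms in $[{\frak L}_{(g')^{-1}},{\frak L}_{h'}]$ and $[{\frak L}_{g'},{\frak L}_{h'}]$, both already shown to vanish; symmetrically $[V_{{\frak C}_g},{\frak L}_{{\frak C}_h,1}]=0$. Finally, for $[{\frak L}_{{\frak C}_g,1},{\frak L}_{{\frak C}_h,1}]$ I would write the second factor as a generator $[u_{h'},v_{(h')^{-1}}]$ and apply the super Jacobi identity to push the first factor $a\in{\frak L}_{{\frak C}_g,1}$ inward, reducing $[a,[u_{h'},v_{(h')^{-1}}]]$ to $[[a,u_{h'}],v_{(h')^{-1}}]$ and $[u_{h'},[a,v_{(h')^{-1}}]]$, which vanish because $[a,{\frak L}_{h'}]=[a,{\frak L}_{(h')^{-1}}]=0$ by the previous step. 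The only care needed throughout is to reduce first to homogeneous generators and invoke bilinearity, and to track the parity signs in the super Jacobi identity, which affect none of the containments.
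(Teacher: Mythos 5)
Your proof is correct and follows essentially the same route as the paper's: the same four-summand expansion of the bracket, Lemma \ref{C_g cerrado} together with the grading to place the $[V_{{\frak C}_g},V_{{\frak C}_g}]$ and mixed pieces, and the super Jacobi identity to push products involving ${\frak L}_{{\frak C}_g,1}$ back into ${\frak L}_{{\frak C}_g,1}$ (respectively to kill them in assertion 2). The only differences are cosmetic refinements in your favor: you treat the case $g'h'=1$ in assertion 2 explicitly via the disjointness of ${\frak C}_g$ and ${\frak C}_h$, a point the paper passes over silently, and in assertion 1 you keep the second factor of $[{\frak L}_{{\frak C}_g,1},{\frak L}_{{\frak C}_g,1}]$ as a generic element of ${\frak L}_1$ rather than expanding it into generators, which shortens the same computation.
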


\begin{proof}
1. We have
\begin{equation}\label{cuatro2}
[{\frak L}_{{\frak C}_g},{\frak L}_{{\frak C}_g}]=[{\frak
L}_{{\frak C}_g,1} \oplus V_{{\frak C}_g},{\frak L}_{{\frak
C}_g,1} \oplus V_{{\frak C}_g}] \subset
\end{equation}
$$[{\frak L}_{{\frak C}_g,1}, {\frak L}_{{\frak C}_g,1}] + [{\frak L}_{{\frak C}_g,1},
V_{{\frak C}_g}] + [V_{{\frak C}_g},{\frak L}_{{\frak C}_g,1}] +
[V_{{\frak C}_g}, V_{{\frak C}_g}].$$ Consider the above second
summand $[{\frak L}_{{\frak C}_g,1}, V_{{\frak C}_g}]$. Taking
into account ${\frak L}_{{\frak C}_g,1} \subset {\frak L}_1$ and
$[{\frak L}_1,{\frak L}_g] \subset {\frak L}_g$ for any $g \in
\Sigma_g$, we have $[{\frak L}_{{\frak C}_g,1} , V_{{\frak C}_g}]
\subset V_{{\frak C}_g}$. In a similar way $[V_{{\frak
C}_g},{\frak L}_{{\frak C}_g,1}] \subset V_{{\frak C}_g}$ and so
\begin{equation}\label{diez2}
[{\frak L}_{{\frak C}_g,1}, \oplus V_{{\frak C}_g}] + [V_{{\frak
C}_g},{\frak L}_{{\frak C}_g,1}] \subset V_{{\frak C}_g}.
\end{equation}
Consider now the fourth summand $[V_{{\frak C}_g}, V_{{\frak
C}_g}]$ in equation (\ref{cuatro2}) and suppose there exist $g',
g'' \in {\frak C}_g$ such that $[{\frak L}_{g'},{\frak L}_{g''}]
\neq 0$. If $g'' = (g')^{-1},$ clearly $[{\frak L}_{g'},{\frak
L}_{g''}] = [{\frak L}_{g'},{\frak L}_{(g')^{-1}}] \subset {\frak
L}_{{\frak C}_g,1}$. Otherwise, if $g'' \neq (g')^{-1}$, then
$g'g'' \in \Sigma_G$ and Lemma \ref{C_g cerrado} gives us $g'g''
\in {\frak C}_g$. Hence, $[{\frak L}_{g'},{\frak L}_{g''}] \subset
{\frak L}_{g'g''} \subset V_{{\frak C}_g}$. In any case
\begin{equation}\label{nueve2}
[V_{{\frak C}_g}, V_{{\frak C}_g}] \subset {\frak L}_{{\frak
C}_g}.
\end{equation}
Finally, let us consider the first summand $[{\frak L}_{{\frak
C}_g,1},{\frak L}_{{\frak C}_g,1}]$ in equation (\ref{cuatro2}).
We have $$[{\frak L}_{{\frak C}_g,1} ,{\frak L}_{{\frak C}_g,1}] =
[\sum\limits_{g' \in {\frak C}_g}[{\frak L}_{g'},{\frak
L}_{(g')^{-1}}],\sum\limits_{g'' \in {\frak C}_g}[{\frak
L}_{g''},{\frak L}_{(g'')^{-1}}]] \subset$$ $$\sum\limits_{{\tiny
\begin{array}{c}
  g',g'' \in {\frak C}_g \\
 \bar{i},\bar{j},\bar{k},\bar{l} \in {\hu Z}_2 \\
\end{array}}}[[{\frak L}_{g'}^{\bar i},{\frak L}_{(g')^{-1}}^{\bar j}],
[{\frak L}_{g''}^{\bar k},{\frak L}_{(g'')^{-1}}^{\bar l}]],$$
last equality being consequence of equation (\ref{suma16}). Taking
now into account super Jacobi identity we get
$$\sum\limits_{{\tiny
\begin{array}{c}
  g',g'' \in {\frak C}_g \\
 \bar{i},\bar{j},\bar{k},\bar{l} \in {\hu Z}_2 \\
\end{array}}}[[{\frak L}_{g'}^{\bar i},{\frak L}_{(g')^{-1}}^{\bar j}],
[{\frak L}_{g''}^{\bar k},{\frak L}_{(g'')^{-1}}^{\bar l}]]
\subset $$
$$ \sum\limits_{{\tiny
\begin{array}{c}
  g',g'' \in {\frak C}_g \\
 \bar{i},\bar{j},\bar{k},\bar{l} \in {\hu Z}_2 \\
\end{array}}}([{\frak L}_{g'}^{\bar i},[{\frak L}_{(g')^{-1}}^{\bar j},
[{\frak L}_{g''}^{\bar k},{\frak L}_{(g'')^{-1}}^{\bar l}]]] +
[{\frak L}_{(g')^{-1}}^{\bar j},[{\frak L}_{g'}^{\bar i}, [{\frak
L}_{g''}^{\bar k},{\frak L}_{(g'')^{-1}}^{\bar l}]]]) \subset $$
$$\sum\limits_{
  g' \in {\frak C}_g} ([{\frak L}_{g'}^{\bar i},
{\frak L}_{(g')^{-1}}^{{\bar j}+{\bar k}+{\bar l}}]+ [{\frak
L}_{(g')^{-1}}^{{\bar j}},{\frak L}_{g'}^{{\bar i}+{\bar k}+{\bar
l}}] ) \subset \sum\limits_{
  g' \in {\frak C}_g} [{\frak L}_{g'},
{\frak L}_{(g')^{-1}}]={\frak L}_{{\frak C}_g,1}.$$ That is,
\begin{equation}\label{33}
[{\frak L}_{{\frak C}_g,1} ,{\frak L}_{{\frak C}_g,1} ] \subset
{\frak L}_{{\frak C}_g,1}
\end{equation}
From equations (\ref{cuatro2})-(\ref{33}) we conclude $[{\frak
L}_{{\frak C}_g},{\frak L}_{{\frak C}_g}] \subset {\frak
L}_{{\frak C}_g}$.

2. We have as in 1. that
\begin{equation}\label{cuatro22}
[{\frak L}_{{\frak C}_g},{\frak L}_{{\frak C}_h}] \subset [{\frak
L}_{{\frak C}_g,1} ,{\frak L}_{{\frak C}_h,1} ]+ [{\frak
L}_{{\frak C}_g,1} ,  V_{{\frak C}_h}]+ [ V_{{\frak C}_g},{\frak
L}_{{\frak C}_h,1} ]+ [V_{{\frak C}_g}, V_{{\frak C}_h}].
\end{equation}

 Let us suppose that there exist $g^{\prime }  \in {\frak
C}_{g}$ and $h^{\prime }  \in {\frak C}_{h}$ such that $[{\frak
L}_{g^{\prime } },{\frak L}_{h^{\prime } }]\neq 0$. Then
$g^{\prime} h^{\prime } \in \Sigma_G$ and we have as consequence
of Lemma \ref{C_g cerrado}  that $g $ is connected to $h $, a
contradiction. From here $[V_{{\frak C}_g}, V_{{\frak C}_h}]=0$.
Taking into account this equality and the fact $(g^{\prime})^{-1}
\in {\frak C}_{g}$ for any $g^{\prime} \in {\frak C}_{g}$, we can
argue  with super Jacobi identity  in $[[{\frak L}_{g^{\prime }
},{\frak L}_{(g^{\prime })^{-1} }],{\frak L}_{h' }]$, in a similar
way to item 1., to get $[[{\frak L}_{g^{\prime } },{\frak
L}_{(g^{\prime
 })^{-1}}], {\frak L}_{h^{\prime  }}]= 0$. Now a same argument can
 be applied to verify
$[ [{\frak L}_{g^{\prime} },{\frak L}_{(g^{\prime})^{-1}
}],[{\frak L}_{h^{\prime}}, {\frak L}_{(h^{\prime} )^{-1}}]]=0.$
 Taking into account equation (\ref{cuatro22}) we have proved 2.
\end{proof}

\medskip

Proposition \ref{pro2}-1 let us assert that for any $g \in
\Sigma_g$, ${\frak L}_{{\frak C}_g}$ is a (graded) subalgebra of
${\frak L}$ that we call the subalgebra of ${\frak L}$ {\it
associated} to ${\frak C}_g$.

\begin{theorem}\label{teo1}
The following assertions hold.
\begin{enumerate}
\item[{\rm 1.}]  For any $g \in \Sigma_G$, the graded subalgebra
${\frak L}_{{\frak C}_g}={\frak L}_{{\frak C}_g,1} \oplus
V_{{\frak C}_g}$ of ${\frak L}$ associated to ${\frak C}_{g}$ is a
graded ideal of ${\frak L}$.

\item[{\rm 2.}]  If ${\frak L}$ is gr-simple, then there exists a
$\Sigma_G$-connection from $g$ to $g'$ for any $g, g' \in
\Sigma_G$, and ${\frak L}_1 = \sum\limits_{g \in \Sigma_G}[{\frak
L}_g,{\frak L}_{g^{-1}}]$.
\end{enumerate}
\end{theorem}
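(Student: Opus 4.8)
The plan is to prove the two assertions of Theorem \ref{teo1} separately, building directly on Proposition \ref{pro2}.

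For assertion 1, I would show that ${\frak L}_{{\frak C}_g}$ is a graded ideal by verifying $[{\frak L}_{{\frak C}_g}, {\frak L}] \subset {\frak L}_{{\frak C}_g}$. The key observation is the decomposition ${\frak L} = {\frak L}_1 \oplus (\bigoplus_{h \in \Sigma_G} {\frak L}_h)$, where each $h \in \Sigma_G$ lies in exactly one connection class, so $\bigoplus_{h \in \Sigma_G} {\frak L}_h = V_{{\frak C}_g} \oplus (\bigoplus_{h \notin {\frak C}_g} {\frak L}_h)$, and the second piece is contained in $\sum {\frak L}_{{\frak C}_h}$ over representatives $h$ of the other classes. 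The products of ${\frak L}_{{\frak C}_g}$ with each of these three pieces are controlled: with $V_{{\frak C}_g}$ and ${\frak L}_{{\frak C}_g,1}$ by Proposition \ref{pro2}-1, and with the remaining classes by Proposition \ref{pro2}-2, which forces those products to vanish. The one genuinely new piece is the product $[{\frak L}_{{\frak C}_g}, {\frak L}_1]$, in particular $[V_{{\frak C}_g}, {\frak L}_1]$ and $[{\frak L}_{{\frak C}_g,1}, {\frak L}_1]$. Here I expect the main obstacle: I must show that $[{\frak L}_1, {\frak L}_{g'}] \subset V_{{\frak C}_g}$ for $g' \in {\frak C}_g$, which is immediate from $[{\frak L}_1, {\frak L}_{g'}] \subset {\frak L}_{g'}$, and that $[{\frak L}_{{\frak C}_g,1}, {\frak L}_1] \subset {\frak L}_1$ lands back inside ${\frak L}_{{\frak C}_g,1}$. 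This last containment is the delicate point and would be handled exactly as in Proposition \ref{pro2}-1, applying the super Jacobi identity to rewrite $[[{\frak L}_{g'}, {\frak L}_{(g')^{-1}}], {\frak L}_1]$ as a sum of terms of the form $[{\frak L}_{g'}, {\frak L}_{(g')^{-1}}]$, using that $[{\frak L}_1, {\frak L}_{g'}] \subset {\frak L}_{g'}$.

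For assertion 2, I would argue by contraposition using gr-simplicity. Fix $g \in \Sigma_G$ and consider ${\frak L}_{{\frak C}_g}$, which by assertion 1 is a nonzero graded ideal of ${\frak L}$. Since ${\frak L}$ is gr-simple, its only graded ideals are $\{0\}$ and ${\frak L}$, so ${\frak L}_{{\frak C}_g} = {\frak L}$. This forces $V_{{\frak C}_g} = \bigoplus_{g' \in {\frak C}_g} {\frak L}_{g'}$ to account for every homogeneous component outside ${\frak L}_1$, whence ${\frak C}_g = \Sigma_G$; that is, every $g' \in \Sigma_G$ is $\Sigma_G$-connected to $g$, giving the desired connection from $g$ to $g'$. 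The identity ${\frak L}_{{\frak C}_g,1} = {\frak L}_1$ then follows from comparing the degree-$1$ components of ${\frak L}_{{\frak C}_g} = {\frak L}$, yielding ${\frak L}_1 = \sum_{g' \in {\frak C}_g}[{\frak L}_{g'}, {\frak L}_{(g')^{-1}}] = \sum_{g \in \Sigma_G}[{\frak L}_g, {\frak L}_{g^{-1}}]$.

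I anticipate the verification in assertion 1 to be the bulk of the work, and the Jacobi-identity manipulation for $[{\frak L}_{{\frak C}_g,1}, {\frak L}_1] \subset {\frak L}_{{\frak C}_g,1}$ to be its crux, since it is the only product not already covered verbatim by Proposition \ref{pro2}. The remaining containments reduce to the grading relation $[{\frak L}_1, {\frak L}_h] \subset {\frak L}_h$ together with the closure property of ${\frak C}_g$ recorded in Lemma \ref{C_g cerrado}. Assertion 2 is then essentially a formal consequence of gr-simplicity applied to the ideal constructed in assertion 1, with no further calculation required.
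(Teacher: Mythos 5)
Your proposal is correct and follows essentially the same route as the paper: decompose ${\frak L}$ as ${\frak L}_1 \oplus V_{{\frak C}_g} \oplus (\bigoplus_{h \notin {\frak C}_g} {\frak L}_h)$, absorb the products with the last two pieces via Proposition \ref{pro2}, and obtain assertion 2 as an immediate consequence of gr-simplicity of ${\frak L}$ applied to the ideal ${\frak L}_{{\frak C}_g}$. If anything, you make explicit the one step the paper leaves implicit, namely the containment $[{\frak L}_{{\frak C}_g,1}, {\frak L}_1] \subset {\frak L}_{{\frak C}_g,1}$ obtained from the super Jacobi identity together with $[{\frak L}_1,{\frak L}_{g'}] \subset {\frak L}_{g'}$, correctly identifying it as the only product not already covered by Proposition \ref{pro2}.
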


\begin{proof}
1. Taking into account  Proposition \ref{pro2} we have  $$[{\frak
L}_{{\frak C}_g},{\frak L}] = [{\frak L}_{{\frak C}_g},{\frak L}_1
\oplus (\bigoplus\limits_{g' \in {\frak C}_g}{\frak L}_{g'})
\oplus (\bigoplus\limits_{h \notin {\frak C}_g}{\frak L}_h)\rbrack
=$$
$$= [{\frak L}_{{\frak C}_g},{\frak L}_1] \oplus
(\bigoplus\limits_{g' \in {\frak C}_g}[{\frak L}_{{\frak
C}_g},{\frak L}_{g'}]) \oplus (\bigoplus\limits_{h \notin {\frak
C}_g}[{\frak L}_{{\frak C}_g},{\frak L}_h]) \subset {\frak
L}_{{\frak C}_g}.$$

2. The gr-simplicity of ${\frak L}$ implies ${\frak L}_{{\frak
C}_g} = {\frak L}$. From here ${\frak C}_g = \Sigma_G$ and ${\frak
L}_1 = \sum\limits_{g \in \Sigma_G}[{\frak L}_g,{\frak
L}_{g^{-1}}]$.
\end{proof}

\begin{theorem}\label{teo2}
For the complementary  vector space  $\mathcal{U}$ of $span_{\hu
K}\{[{\frak L}_g,{\frak L}_{g^{-1}}] : g \in \Sigma_G\}$ in
${\frak L}_1$, we have
$${\frak L} = \mathcal{U} + \sum\limits_{g \in \Sigma_G/\sim} I_{[g]},$$ where
any $I_{[g]}$ is one of the graded ideals ${\frak L}_{{\frak
C}_g}$ of ${\frak L}$ described in Theorem \ref{teo1}, satisfying
$[I_{[g]},I_{[g']}]=0$ if $[g] \neq [g']$.
\end{theorem}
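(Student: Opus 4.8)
The plan is to decompose $\mathfrak{L}$ using the connection relation and then verify the three claims: that the stated sum exhausts $\mathfrak{L}$, that each summand is one of the ideals from Theorem~\ref{teo1}, and that distinct ideals annihilate one another. First I would write out the grading as $\mathfrak{L} = \mathfrak{L}_1 \oplus (\bigoplus_{g \in \Sigma_G} \mathfrak{L}_g)$ and split the identity component. By definition of $\mathcal{U}$ as a complement, we have
$$\mathfrak{L}_1 = \mathcal{U} \oplus span_{\hu K}\{[\mathfrak{L}_g,\mathfrak{L}_{g^{-1}}] : g \in \Sigma_G\}.$$
The connection relation $\sim$ (shown to be an equivalence in Proposition~\ref{pro1}) partitions $\Sigma_G$ into the classes $\mathfrak{C}_g$, so I would rewrite the spanning set above by grouping the brackets $[\mathfrak{L}_{g'},\mathfrak{L}_{(g')^{-1}}]$ according to which class $g'$ belongs to; this expresses that summand as $\sum_{[g] \in \Sigma_G/\sim} \mathfrak{L}_{\mathfrak{C}_g,1}$, using the defining equation~(\ref{suma16}).

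Next I would account for the homogeneous components $\mathfrak{L}_{g'}$ with $g' \in \Sigma_G$. Each such $g'$ lies in exactly one class $\mathfrak{C}_g$, so $\bigoplus_{g' \in \Sigma_G} \mathfrak{L}_{g'} = \bigoplus_{[g] \in \Sigma_G/\sim} V_{\mathfrak{C}_g}$. Combining this with the treatment of $\mathfrak{L}_1$ and recalling that each ideal is $\mathfrak{L}_{\mathfrak{C}_g} = \mathfrak{L}_{\mathfrak{C}_g,1} \oplus V_{\mathfrak{C}_g}$, I would conclude
$$\mathfrak{L} = \mathcal{U} + \sum\limits_{[g] \in \Sigma_G/\sim} \mathfrak{L}_{\mathfrak{C}_g},$$
setting $I_{[g]} := \mathfrak{L}_{\mathfrak{C}_g}$. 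That each $I_{[g]}$ is a graded ideal is exactly Theorem~\ref{teo1}-1, so this part requires no new work. The annihilation property $[I_{[g]},I_{[g']}] = 0$ when $[g] \neq [g']$ follows directly from Proposition~\ref{pro2}-2, since $[g] \neq [g']$ means the representatives lie in distinct classes, i.e.\ $g' \notin \mathfrak{C}_g$.

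The main subtlety I expect is bookkeeping rather than a deep obstacle: I must justify that the grouping of $\mathfrak{L}_1$ by equivalence classes is \emph{well defined}, meaning the choice of representative $g$ of each class does not matter, and that the decomposition of $\bigoplus_{g' \in \Sigma_G}\mathfrak{L}_{g'}$ is a genuine direct sum because the classes $\mathfrak{C}_g$ partition $\Sigma_G$ (the disjointness coming from the remark after Proposition~\ref{pro1}). I would also be careful that the outer operation is written as a sum rather than a direct sum: the various $\mathfrak{L}_{\mathfrak{C}_g,1}$ all sit inside the single space $\mathfrak{L}_1$ and need not meet $\mathcal{U}$ or each other trivially, which is precisely why the theorem states $\mathfrak{L} = \mathcal{U} + \sum I_{[g]}$ and not a direct sum. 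Once these indexing points are handled, the proof assembles immediately from Theorem~\ref{teo1} and Proposition~\ref{pro2}.
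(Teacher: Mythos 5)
Your proposal is correct and takes essentially the same route as the paper's own proof, which is simply terser: it defines $I_{[g]}:={\frak L}_{{\frak C}_g}$, cites Theorem~\ref{teo1}-1 for the ideal property and Proposition~\ref{pro2}-2 for $[I_{[g]},I_{[g']}]=0$, and leaves implicit exactly the bookkeeping (partition of $\Sigma_G$ into classes, splitting of ${\frak L}_1$ as $\mathcal{U}\oplus span_{\hu K}\{[{\frak L}_g,{\frak L}_{g^{-1}}]\}$) that you spell out. One minor imprecision in your closing remark: each ${\frak L}_{{\frak C}_g,1}$ \emph{does} meet $\mathcal{U}$ trivially, since it lies inside the span of which $\mathcal{U}$ is a complement; the sum fails to be direct only because the subspaces ${\frak L}_{{\frak C}_g,1}$ for distinct classes may overlap inside ${\frak L}_1$.
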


\begin{proof}
By Proposition \ref{pro1}, we can consider the quotient set
$\Sigma_G/\sim:= \{[g]:g \in \Sigma_G\}$. Let us denote by
$I_{[g]}:={\frak L}_{{\frak C}_g}.$ We have $I_{[g]}$ is well
defined and, by Theorem \ref{teo1}-1,  a graded ideal of ${\frak
L}$. Therefore
$${\frak L} = \mathcal{U} + \sum\limits_{[g] \in \Sigma_G/\sim} I_{[g]}.$$
By applying Proposition \ref{pro2}-2 we also obtain
$[I_{[g]},I_{[g']}] = 0$ if $[g] \neq [g']$.
\end{proof}

\medskip

Let us denote by ${\mathcal Z}({\frak L})=\{v\in {\frak L}:
[v,{\frak L}]=0\}$ the {\it center} of ${\frak L}$.

\begin{corollary}\label{co1}
If ${\mathcal Z}({\frak L}) = 0$ and ${\frak L}_1 = \sum\limits_{g
\in \Sigma_G}[{\frak L}_g,{\frak L}_{g^{-1}}]$, then ${\frak L}$
is the direct sum of the graded ideals given in Theorem
\ref{teo1}-1,
$${\frak L} = \bigoplus\limits_{[g] \in \Sigma_G/\sim} I_{[g]},$$
which satisfy $[I_{[g]},I_{[g']}]=0$ if $[g] \neq [g']$.
\end{corollary}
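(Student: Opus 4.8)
The plan is to upgrade the decomposition $\mathfrak{L} = \mathcal{U} + \sum_{[g]} I_{[g]}$ from Theorem~\ref{teo2} into a \emph{direct} sum by exploiting the two new hypotheses. First I would dispose of the summand $\mathcal{U}$: since we now assume $\mathfrak{L}_1 = \sum_{g \in \Sigma_G}[\mathfrak{L}_g,\mathfrak{L}_{g^{-1}}]$, the space $\mathcal{U}$, being a complement in $\mathfrak{L}_1$ of $\operatorname{span}_{\hu K}\{[\mathfrak{L}_g,\mathfrak{L}_{g^{-1}}] : g \in \Sigma_G\}$, must be zero. Hence the sum from Theorem~\ref{teo2} already reads $\mathfrak{L} = \sum_{[g] \in \Sigma_G/\sim} I_{[g]}$, and it only remains to verify the sum is direct.

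Next I would argue directness. Recall each ideal decomposes as $I_{[g]} = \mathfrak{L}_{\mathfrak{C}_g,1} \oplus V_{\mathfrak{C}_g}$, where $V_{\mathfrak{C}_g} = \bigoplus_{g' \in \mathfrak{C}_g}\mathfrak{L}_{g'}$ lives in the homogeneous spaces indexed by $\mathfrak{C}_g$, while $\mathfrak{L}_{\mathfrak{C}_g,1} \subset \mathfrak{L}_1$. Because the equivalence classes $\{\mathfrak{C}_g\}$ partition $\Sigma_G$ (Proposition~\ref{pro1}), the subspaces $V_{\mathfrak{C}_g}$ for distinct classes are supported on disjoint index sets and therefore meet only in $0$; the genuine work is in the degree-$1$ parts $\mathfrak{L}_{\mathfrak{C}_g,1}$, which all sit inside $\mathfrak{L}_1$ and could in principle overlap. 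So the crux is to show
\[
I_{[g]} \cap \Big(\sum_{[g'] \neq [g]} I_{[g']}\Big) = 0
\]
for every class $[g]$.

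The mechanism that forces this intersection to vanish is the hypothesis $\mathcal{Z}(\mathfrak{L}) = 0$ together with $[I_{[g]},I_{[g']}] = 0$ for $[g] \neq [g']$ from Theorem~\ref{teo2}. I would take an element $v$ in the above intersection; by the disjoint-support observation its component in $\bigoplus_{g'}\mathfrak{L}_{g'}$ vanishes, so $v \in \mathfrak{L}_{\mathfrak{C}_g,1} \subset \mathfrak{L}_1$, and symmetrically $v$ lies in $\sum_{[g'] \neq [g]} \mathfrak{L}_{\mathfrak{C}_{g'},1}$. Now I compute the bracket $[v, \mathfrak{L}]$. On the one hand, regarding $v$ as an element of $I_{[g]}$, it brackets trivially with every $I_{[g']}$, $[g'] \neq [g]$; on the other hand, regarding $v$ as lying in the other ideals, it brackets trivially with $I_{[g]}$. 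Since $\mathfrak{L} = \sum_{[h]} I_{[h]}$ exhausts the algebra, these two facts together give $[v,\mathfrak{L}] = 0$, so $v \in \mathcal{Z}(\mathfrak{L}) = 0$.

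I expect the main obstacle to be the bookkeeping in that last bracket computation: one must check that an element of $\mathfrak{L}_1 = \mathfrak{L}_1^{\bar 0}\oplus\mathfrak{L}_1^{\bar 1}$ which is simultaneously an ideal-element of $I_{[g]}$ and of the complementary sum really does centralize \emph{all} of $\mathfrak{L}$, handling the interaction with $\mathfrak{L}_1$ itself (not just the $\mathfrak{L}_{g'}$ with $g' \in \Sigma_G$) and keeping track of the $\mathbf{Z}_2$-grading signs. The cleanest way is to note that $[v,\mathfrak{L}_1]$ lands in $\mathfrak{L}_1$ and, via the super Jacobi identity and the orthogonality $[I_{[g]},I_{[g']}]=0$, is absorbed by the same centralizing argument, so no residual term survives. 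Once directness of the degree-$1$ parts is established, combining it with the disjointness of the $V_{\mathfrak{C}_g}$ yields $\mathfrak{L} = \bigoplus_{[g]} I_{[g]}$, and the relation $[I_{[g]},I_{[g']}]=0$ for $[g]\neq[g']$ is inherited verbatim from Theorem~\ref{teo2}.
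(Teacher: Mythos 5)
Your proposal is correct and follows essentially the same route as the paper's own (much terser) proof: the hypothesis ${\frak L}_1 = \sum_{g \in \Sigma_G}[{\frak L}_g,{\frak L}_{g^{-1}}]$ kills the complement $\mathcal{U}$ from Theorem \ref{teo2}, and directness is forced by combining $[I_{[g]},I_{[g']}]=0$ for $[g]\neq[g']$ with ${\mathcal Z}({\frak L})=0$ via exactly the centralizing argument you describe. Your intermediate reduction of an intersection element to the degree-$1$ parts via disjointness of the supports of the $V_{{\frak C}_g}$ is harmless but unnecessary, since the bracket computation $[v,{\frak L}]=[v,\sum_{[h]}I_{[h]}]=0$ works for any element of the intersection regardless of its homogeneous components.
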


\begin{proof}
From ${\frak L}_1 = \sum\limits_{g \in \Sigma_G}[{\frak
L}_g,{\frak L}_{g^{-1}}]$ it is clear that ${\frak L} =
\sum\limits_{[g] \in \Sigma_G/\sim}I_{[g]}$. The direct character
of the sum now follows from the facts $[I_{[g]},I_{[g']}] = 0$, if
$[g] \neq [g']$, and ${\mathcal Z}({\frak L}) = 0$.
\end{proof}

\section{The gr-simple components}

The  study of the structure of this kind of algebras has been
reduced to consider those satisfying that the $G$-support has all
of its elements $\Sigma_G$-connected. It is a natural question if
these algebras are gr-simple. We study this problem in this
section.

\begin{lemma}\label{lema4}
Let ${\frak L}$ be a graded Lie superalgebra with ${\mathcal
Z}({\frak L}) = 0$ and ${\frak L}_1 = \sum\limits_{g \in
\Sigma_G}[{\frak L}_g,{\frak L}_{g^{-1}}]$. If $I$ is a graded
ideal of ${\frak L}$ such that $I \subset {\frak L}_1$, then $I =
\{0\}.$
\end{lemma}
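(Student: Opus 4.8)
The plan is to prove that $I$ is central, i.e.\ that $[I,{\frak L}] = 0$, and then to invoke the hypothesis ${\mathcal Z}({\frak L}) = 0$ to conclude $I = \{0\}$. The whole argument rests on the interplay between gradedness and the containment $I \subset {\frak L}_1$.

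First I would record the elementary but crucial observation that, since $I$ is a \emph{graded} ideal, it decomposes as $I = \bigoplus_{g \in G} I_g$ with $I_g = I \cap {\frak L}_g$; the assumption $I \subset {\frak L}_1$ then forces $I_g = I \cap {\frak L}_g \subset {\frak L}_1 \cap {\frak L}_g = 0$ for every $g \neq 1$, so that $I = I_1 \subset {\frak L}_1$ with no higher homogeneous components. The key step is then the following: for any $g \in \Sigma_G$, the inclusion $I \subset {\frak L}_1$ gives $[I,{\frak L}_g] \subset {\frak L}_{1g} = {\frak L}_g$, while $I$ being an ideal gives $[I,{\frak L}_g] \subset I$; combining these yields $[I,{\frak L}_g] \subset I \cap {\frak L}_g = 0$. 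Hence $I$ annihilates every homogeneous subspace ${\frak L}_g$ with $g \in \Sigma_G$.

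It remains to control $[I,{\frak L}_1]$, and here I would use the second hypothesis ${\frak L}_1 = \sum_{g \in \Sigma_G}[{\frak L}_g,{\frak L}_{g^{-1}}]$ together with the super Jacobi identity. For homogeneous elements $x \in I$, $y \in {\frak L}_g$ and $z \in {\frak L}_{g^{-1}}$ one expands $[x,[y,z]] = [[x,y],z] \pm [y,[x,z]]$, and both bracketed terms vanish since $[x,y] \in [I,{\frak L}_g] = 0$ and $[x,z] \in [I,{\frak L}_{g^{-1}}] = 0$. Here the symmetry of the support $\Sigma_G$ is exactly what guarantees $g^{-1} \in \Sigma_G$ so that the previous step applies to $[I,{\frak L}_{g^{-1}}]$ as well. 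Consequently $[I,[{\frak L}_g,{\frak L}_{g^{-1}}]] = 0$ for each $g \in \Sigma_G$, and summing over $g$ gives $[I,{\frak L}_1] = 0$. Putting everything together, $[I,{\frak L}] = [I,{\frak L}_1] + \sum_{g \in \Sigma_G}[I,{\frak L}_g] = 0$, so $I \subset {\mathcal Z}({\frak L}) = 0$ and therefore $I = \{0\}$.

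I do not expect a serious obstacle: the proof is short and essentially formal. The only points requiring care are purely bookkeeping, namely keeping track of the parities $\bar i, \bar j \in {\hu Z}_2$ when applying the super Jacobi identity (which only affects a sign and not the vanishing), and explicitly using the symmetry of $\Sigma_G$ to legitimize the step $[I,{\frak L}_{g^{-1}}] = 0$. The genuinely load-bearing idea is the grading-plus-ideal sandwich $[I,{\frak L}_g] \subset I \cap {\frak L}_g = 0$; everything else follows by propagating this vanishing into ${\frak L}_1$ via Jacobi and the generating hypothesis on ${\frak L}_1$.
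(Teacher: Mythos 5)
Your proof is correct and takes essentially the same approach as the paper's: the same sandwich argument $[I,{\frak L}_g]\subset I\cap{\frak L}_g\subset{\frak L}_1\cap{\frak L}_g=0$ for $g\in\Sigma_G$, the same propagation to $[I,{\frak L}_1]=0$ via the super Jacobi identity and the hypothesis ${\frak L}_1=\sum\limits_{g\in\Sigma_G}[{\frak L}_g,{\frak L}_{g^{-1}}]$, and the same final appeal to ${\mathcal Z}({\frak L})=0$. The only difference is organizational: you argue directly that $[I,{\frak L}]=0$ and hence $I\subset{\mathcal Z}({\frak L})=0$, whereas the paper runs the identical argument as a proof by contradiction.
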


\begin{proof}
Suppose there exists a nonzero graded ideal $I$ of ${\frak L}$
such that $I \subset {\frak L}_1$. We have $[I,
\bigoplus\limits_{g \in \Sigma_G}{\frak L}_g] \subset I \subset
{\frak L}_1$, therefore $[I, \bigoplus\limits_{g \in
\Sigma_G}{\frak L}_g] \subset {\frak L}_1 \cap
(\bigoplus\limits_{g \in \Sigma_G}{\frak L}_g) = 0$. The fact
${\mathcal Z}({\frak L}) = 0$ implies $[I, {\frak L}_1] \neq 0$.
Taking account ${\frak L}_1 = \sum\limits_{g \in \Sigma_G}[{\frak
L}_g,{\frak L}_{g^{-1}}]$, there exists $g_0 \in \Sigma_G$ such
that $[I, [{\frak L}_{g_0},{\frak L}_{g_0^{-1}}]] \neq 0$. By
writing $I=I^{\bar 0} \oplus I^{\bar 1}$ with $I^{\bar i}= I \cap
{\frak L}^{\bar i}$, $i \in {\hu Z}_2$, and taking into account
equation (\ref{separa}) we have $[I^{\bar i}, [{\frak
L}_{g_0}^{\bar j},{\frak L}_{g_0^{-1}}^{\bar k}]] \neq 0$ for some
${\bar i},{\bar j},{\bar k} \in {\hu Z}_2$. Super Jacobi identity
gives us now that either $0 \neq [I^{\bar i}, {\frak
L}_{g_0}^{\bar j}] \subset {\frak L}_{g_0} \cap {\frak L}_1$ or $0
\neq [I^{\bar i},{\frak L}_{g_0^{-1}}^{\bar k}] \subset {\frak
L}_{g_0^{-1}} \cap {\frak L}_1$, a contradiction. Therefore $I =
\{0\}.$
\end{proof}

%Observe that the grading of $I$ and equation (\ref{separa}) let us
%write
%\begin{equation}\label{ideal}
%I = I^{\bar 0} \oplus I^{\bar 1} = (I_1^{\bar 0} \oplus
%(\bigoplus\limits_{g \in \Sigma_G}(I \cap {{\frak L}_g}^{\bar
%0}))) \oplus (I_1^{\bar 1} \oplus (\bigoplus\limits_{g \in
%\Sigma_G} (I \cap {{\frak L}_g}^{\bar 1})))\end{equation} {\rm
%where $I_1^{\bar k} := I \cap {\frak L}_1^{\bar k},$ for any
%${\bar k} \in {\hu Z}_2$.

Let us introduce the concepts of $\Sigma_G$-multiplicativity and
maximal length in the framework of graded Lie superalgebras, in a
similar way to the ones for graded Lie algebras \cite{graalg},
split Lie superalgebras \cite{Nosalg}, and  split Lie triple
systems \cite{triples} among other contexts.  Recall that we
denote by $\Sigma_G^{\bar 0} := \{g \in \Sigma_G : {\frak
L}_g^{\bar 0} \neq 0\}$ and by $\Sigma_G^{\bar 1} := \{g \in
\Sigma_G : {\frak L}_g^{\bar 1} \neq 0\}$.

\begin{Definition}
We say that a graded Lie superalgebra ${\frak L}$ is of {\rm
maximal length} if $\dim {\frak L}_g^{\bar i} \in \{0,1\}$ for any
$g \in \Sigma_G$ and ${\bar i} \in {\hu Z}_2$.
\end{Definition}

Observe that, for a graded Lie superalgebra of maximal length
${\frak L}$, the symmetry of $\Sigma_G$ gives us that given some
$g \in \Sigma_G$,
\begin{equation}\label{dimen1o2}
\hbox{ either ${\rm dim}({\frak L}_g)={\rm dim}({\frak
L}_{g^{-1}})=1$ in the case $g \notin \Sigma_G^{\bar 0} \cap
\Sigma_G^{\bar 1}$, or}
\end{equation}
$$\hbox{${\rm dim}({\frak L}_g)={\rm dim}({\frak L}_{g^{-1}})=2$
in the case  $g \in \Sigma_G^{\bar 0} \cap \Sigma_G^{\bar 1}$.}$$
 Also observe  that equation
(\ref{idealpartio})
% taking into account that any graded
%ideal $I= (I_1^{\bar 0} \oplus I_1^{\bar 1}) \oplus (
%\bigoplus\limits_{g \in \Sigma_G})I_g^{\bar 0} \oplus I_g^{\bar
%1})$ with any $I_h^{\bar i}=I_h \cap {\frak L}^{{\bar i}}$, $h \in
%\Sigma_g \cup 1$ and ${\bar i} \in {\hu Z}_2$, we can
let us assert that any nonzero graded ideal $I$ of ${\frak L}$ is
of the form
\begin{equation}\label{max}
I = (I_1^{\bar 0} \oplus (\bigoplus\limits_{g \in \Sigma_I^{\bar
0}} {\frak L}_g^{\bar 0})) \oplus (I_1^{\bar 1} \oplus
(\bigoplus\limits_{g' \in \Sigma_I^{\bar 1}} {\frak L}_{g'}^{\bar
1}))
\end{equation}
${\rm where} \hspace{0.1cm} \Sigma_I^{\bar i} = \{h \in
\Sigma_G^{\bar i} : I \cap {\frak L}_h^{\bar i} \neq 0\}, {\bar i}
\in {\hu Z}_2$.

\begin{Definition}
We say that a graded Lie superalgebra ${\frak L}$ is {\rm
$\Sigma_G$-multiplicative} if given $g \in \Sigma_G^{\bar i}$ and
$g' \in \Sigma_G^{\bar j}, {\bar i}, {\bar j} \in {\hu Z}_2$, such
that $gg' \in \Sigma_G$, then $[{\frak L}_g^{\bar i}, {\frak
L}_{g'}^{\bar j}] \neq 0$.
\end{Definition}

As examples of $\Sigma_G$-multiplicative graded Lie superalgebras
with maximal length we have the split Lie superalgebras considered
in \cite{Nosalg}, the graded Lie algebras in \cite{graalg}, the
semisimple separable $L^*$-algebras \cite{Schue2} and the
semisimple locally finite split Lie algebras over a field of
characteristic zero \cite{Stumme}.

%%%%%%%%%%%%%%%%%%%%%%%%%%%%%%%%%%%%%%%%%%%%%%%%%%%%%%%%%%%%%%%%%%%%%%%%%%%

\begin{lemma}\label{dimensiones}
Let ${\frak L}$ be $\Sigma_G$-multiplicative and  of maximal
length. If $\Sigma_G$ has all of its elements
$\Sigma_G$-connected, then either ${\rm dim}({\frak L}_g)=1$ for
any $g \in \Sigma_G$ or ${\rm dim}({\frak L}_g)=2$ for any $g \in
\Sigma_G$.
\end{lemma}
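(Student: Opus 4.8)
The plan is to show that the function $g \mapsto \dim(\frak L_g)$, which by equation (\ref{dimen1o2}) takes only the values $1$ and $2$ on $\Sigma_G$, is constant. Since $\Sigma_G$ is assumed to have all of its elements $\Sigma_G$-connected, it will suffice to prove that this value is preserved along every single step of a $\Sigma_G$-connection, and then to chain these local preservations together. Throughout I recall from (\ref{dimen1o2}) that $\dim(\frak L_a) = 2$ is equivalent to $a \in \Sigma_G^{\bar 0} \cap \Sigma_G^{\bar 1}$, and that $\dim(\frak L_a) = \dim(\frak L_{a^{-1}})$.

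First I would isolate the local propagation statement: \emph{if $a, b \in \Sigma_G$ with $ab \in \Sigma_G$, then $\dim(\frak L_a) = 2$ if and only if $\dim(\frak L_{ab}) = 2$}. For the forward direction, assume $\dim(\frak L_a) = 2$, so $\frak L_a^{\bar 0} \neq 0$ and $\frak L_a^{\bar 1} \neq 0$, and fix $\bar j \in {\hu Z}_2$ with $b \in \Sigma_G^{\bar j}$. Applying $\Sigma_G$-multiplicativity twice, once to $a \in \Sigma_G^{\bar 0}$ and once to $a \in \Sigma_G^{\bar 1}$ (both against $b \in \Sigma_G^{\bar j}$, with $ab \in \Sigma_G$), yields $[\frak L_a^{\bar 0}, \frak L_b^{\bar j}] \neq 0$ and $[\frak L_a^{\bar 1}, \frak L_b^{\bar j}] \neq 0$; these sit inside $\frak L_{ab}^{\bar j}$ and $\frak L_{ab}^{\bar 1 + \bar j}$ respectively. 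Since $\{\bar j, \bar 1 + \bar j\} = \{\bar 0, \bar 1\}$, this forces $ab \in \Sigma_G^{\bar 0} \cap \Sigma_G^{\bar 1}$, i.e. $\dim(\frak L_{ab}) = 2$.

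The reverse direction is where the symmetry of $\Sigma_G$ does the work, and I expect this to be the only delicate point. I would deduce it from the forward implication applied to the pair $(ab, b^{-1})$: by the symmetry of the support, $b \in \Sigma_G^{\bar j}$ forces $b^{-1} \in \Sigma_G^{\bar j} \subset \Sigma_G$, and $(ab)b^{-1} = a \in \Sigma_G$, so the already-proved implication gives $\dim(\frak L_{ab}) = 2 \Rightarrow \dim(\frak L_a) = 2$. This establishes the biconditional, equivalently $\dim(\frak L_a) = 1 \iff \dim(\frak L_{ab}) = 1$.

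Finally I would run the propagation along a connection. Given arbitrary $g, g' \in \Sigma_G$, take a $\Sigma_G$-connection $\{g_1, \ldots, g_n\}$ from $g$ to $g'$ and set $p_k := g_1 \cdots g_k$, so that $p_1 = g$, the elements $p_1, \ldots, p_{n-1}$ lie in $\Sigma_G$ by the second requirement in the definition of a $\Sigma_G$-connection, and $p_n \in \{g', (g')^{-1}\} \subset \Sigma_G$. At each step $p_{k+1} = p_k g_{k+1}$ with $p_k, g_{k+1}, p_{k+1} \in \Sigma_G$, so the local statement gives $\dim(\frak L_{p_k}) = 2 \iff \dim(\frak L_{p_{k+1}}) = 2$; chaining these equivalences yields $\dim(\frak L_g) = 2 \iff \dim(\frak L_{p_n}) = 2$. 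Since $\dim(\frak L_{p_n}) = \dim(\frak L_{g'})$ by the symmetry recorded in (\ref{dimen1o2}), we conclude $\dim(\frak L_g) = \dim(\frak L_{g'})$ for all $g, g' \in \Sigma_G$, which is exactly the claimed dichotomy. The case $n = 1$ is immediate, since then $g \in \{g', (g')^{-1}\}$ and the conclusion follows from symmetry alone.
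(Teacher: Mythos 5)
Your proposal is correct and is essentially the paper's own argument: both proofs propagate nonvanishing of homogeneous components along a $\Sigma_G$-connection via $\Sigma_G$-multiplicativity and maximal length, and then use the symmetry of the support to pass from the endpoint $g_1\cdots g_n\in\{g',(g')^{-1}\}$ to $g'$ itself. The only divergence is organizational: the paper runs two parallel propagations along the entire connection (one starting from ${\frak L}_g^{\bar 0}$, the other from ${\frak L}_g^{\bar 1}$) to land both parities at the endpoint, whereas you compress the two runs into a single local step preserving $\dim({\frak L}_a)=2$, and your reverse implication (applying the forward step to the pair $(ab,b^{-1})$) is a neat but strictly unnecessary addition, since the forward direction alone, starting from a fixed $g$ with $\dim({\frak L}_g)=2$, already yields the dichotomy.
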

\begin{proof}
Suppose there exists $g \in \Sigma_G$ such that ${\rm dim}({\frak
L}_g)=2$. Hence, ${\rm dim}({\frak L}_{g^{-1}})=2$ and we can
write ${\frak L}_g={\frak L}_g^{\bar 0} \oplus {\frak L}_g^{\bar
1}$ with any ${\frak L}_g^{\bar i}\neq 0$. Given now any $g'\in
\Sigma_G \setminus \{g, g^{-1}\}$, the fact that $g$ and $g'$ are
$\Sigma_G$-connected gives us a $\Sigma_G$-connection
$\{g_1,g_2,....,g_r\}$ from $g$ to $g'$ such that
$$g_1 = g,$$
$$g_1g_2, g_1g_2g_3,..., g_1g_2g_3 \cdots g_{r-1} \in \Sigma_G$$ and
$$g_1g_2g_3 \cdots g_r \in \{g',(g')^{-1}\}.$$
Consider $g_1, g_2$ and $g_1g_2$. Since $g_2 \in \Sigma_G$, some
${\frak L}_{g_2}^{\bar i_2} \neq 0$ with $ {\bar i_2} \in {\hu
Z}_2$, and so $g_2 \in \Sigma_G^{\bar i_2}$. We have $g_1 = g \in
\Sigma_G^{\bar 0}$ and $g_2 \in \Sigma_G^{\bar i_2}$ such that
$g_1g_2 \in \Sigma_G$. Then the $\Sigma_G$-multiplicativity of
${\frak L}$ gives us
\begin{equation}\label{paraaqui}
0 \neq [{\frak L}_{g_1}^{\bar 0}, {\frak L}_{g_2}^{\bar i_2}]
\subset {\frak L}_{g_1g_2}^{\bar i_2}
\end{equation}
Hence, the maximal length of ${\frak L}$ shows $\dim {\frak
L}_{g_1g_2}^{ \bar i_2} = 1$ and so $0 \neq [{\frak L}_{g_1}^{\bar
0}, {\frak L}_{g_2}^{\bar i_2}] = {\frak L}_{g_1g_2}^{\bar i_2}.$
We can argue in a similar way from $g_1g_2, g_3$ and $g_1g_2g_3$
to get $$0 \neq [[{\frak L}_{g_1}^{\bar 0}, {\frak L}_{g_2}^{\bar
i_2}],{\frak L}_{g_2}^{\bar i_3}] ={\frak L}_{g_1g_2g_3}^{\bar i_2
+ \bar i_3}$$ for some ${\bar i_3} \in {\hu Z}_2$. Following this
process with the $\Sigma_G$-connection $\{g_1,....,g_r\}$ we
obtain that
$$0 \neq[[\cdots [[{\frak L}_{g_1}^{\bar 0}, {\frak L}_{g_2}^{\bar
i_2}],{\frak L}_{g_2}^{\bar i_3}], \cdots], {\frak L}_{g_1}^{\bar
i_r}] = {\frak L}_{g_1g_2g_3 \cdots g_r}^{\bar i_2 + \cdots +\bar
i_r}$$ and so either $0 \neq {\frak L}_{g'}^{\bar i_2 + \cdots
+\bar i_r}$ or $0 \neq {\frak L}_{(g')^{-1}}^{\bar i_2 + \cdots
+\bar i_r}.$
That is, for any $g' \in \Sigma_G \setminus
\{g,g^{-1}\}$ we have that
\begin{equation}\label{paraaqui2}
\hbox{$0 \neq {\frak L}_{\xi}^{\bar i_2 + \cdots +\bar i_r}$ for
some $\xi \in \{g', (g')^{-1}\}$.}
\end{equation}
If we argue with the $\Sigma_G$-connection $\{g_1,g_2,....,g_r\}$
%root-multipplicativity of ${\frak L}$
as above, but starting in equation (\ref{paraaqui}) with ${\frak
L}_{g_1}^{\bar 1}$ instead of ${\frak L}_{g_1}^{\bar 0}$ we get $0
\neq {\frak L}_{\xi}^{\bar 1+ \bar i_2 + \cdots +\bar i_r}$.
Hence, and taking into account equation (\ref{paraaqui2}) we have
${\rm dim}({\frak L}_{\xi})=2$ and, by the symmetry of $\Sigma_G$,
${\rm dim}({\frak L}_{\xi^{-1}})=2$. We conclude ${\rm dim}({\frak
L}_{g'})=2$ and so ${\rm dim}({\frak L}_{g})=2$ for any $g \in
\Sigma_G$. Equation (\ref{dimen1o2}) completes the proof.
\end{proof}

\medskip
%From now on, we will suppose that ${\frak L}$ is
%$\Sigma_G$-multiplicative with maximal length, satisfying
%${\mathcal{Z}}({\frak L})=0$ and ${\frak
%L}_1=\sum\limits_{\tilde{g} \in \Sigma_G} [{\frak
%L}_{\tilde{g}},{\frak L}_{{\tilde{g}}^{-1}}]$.

Let ${\frak L}$ be $\Sigma_G$-multiplicative, of maximal length,
with ${\mathcal Z}({\frak L}) = 0$, satisfying ${\frak L}_1 =
\sum\limits_{g \in \Sigma_G}[{\frak L}_g,{\frak L}_{g^{-1}}]$ and
with all of the elements in its $G$-support $\Sigma_G$-connected,
and consider a nonzero graded ideal $I$ of ${\frak L}$. By
equation (\ref{max}) we can write $I = (I_1^{\bar 0} \oplus
(\bigoplus\limits_{g \in \Sigma_I^{\bar 0}}{\frak L}_g^{\bar 0}))
\oplus (I_1^{\bar 1} \oplus (\bigoplus\limits_{g' \in
\Sigma_I^{\bar 1}} {\frak L}_{g'}^{\bar 1}))$ ${\rm where}
\hspace{0.1cm} \Sigma_I^{\bar i} = \{h \in \Sigma_G^{\bar i} : I
\cap {\frak L}_h^{\bar i} \neq 0\}, {\bar i} \in {\hu Z}_2$.
Furthermore,  Lemma \ref{lema4} let us assert that $\Sigma_I^{\bar
i} \neq \emptyset$ for some ${\bar i} \in {\hu Z}_2$. So, we can
take $g_0 \in \Sigma_I^{\bar i}$ such that
\begin{equation}\label{primero}
0 \neq {\frak L}_{g_0}^{\bar i} \subset I.
\end{equation}
For any $g' \in \Sigma_G \setminus \{g_0,g_0^{-1}\}$, the fact
that $g_0$ and $g'$ are $\Sigma_G$-connected gives us a
$\Sigma_G$-connection
\begin{equation}\label{cone}
\{g_1,g_2,....,g_r\}
 \end{equation}
from $g_0$ to $g'$ such that
$$g_1 = g_0,$$
$$g_1g_2, g_1g_2g_3,..., g_1g_2g_3 \cdots g_{r-1} \in \Sigma_G$$ and
$$g_1g_2g_3 \cdots g_r \in \{g',(g')^{-1}\}.$$
Consider $g_1, g_2$ and $g_1g_2$. By arguing with the
$\Sigma_G$-multiplicativity and maximal length of ${\frak L}$ as
in the proof of Lemma \ref{dimensiones}, we get
%Since $g_2 \in \Sigma_G$, some
%${{\frak L}_{g_2}}^{\bar j} \neq 0$ with $ {\bar j} \in {\hu
%Z}_2$, and so $g_2 \in {\Sigma_G}^{\bar j}$. We have $g_1 = g_0
%\in {\Sigma_G}^{\bar i}$ and $g_2 \in {\Sigma_G}^{\bar j}$ such
%that $g_1g_2 \in \Sigma_G$. Then the $\Sigma_G$-multiplicativity
%of ${\frak L}$ gives us  $0 \neq [{{\frak L}_{g_1}}^{\bar i},
%{{\frak L}_{g_2}}^{\bar j}] \subset {{\frak L}_{g_1g_2}}^{\bar i +
%\bar j}.$ Hence, the maximal length of ${\frak L}$ shows $\dim
%{{\frak L}_{g_1g_2}}^{\bar i + \bar j} = 1$ and so
$0 \neq [{\frak L}_{g_1}^{\bar i}, {\frak L}_{g_2}^{\bar i_2}] =
{\frak L}_{g_1g_2}^{\bar i + \bar i_2}$ From here, equation
(\ref{primero}) let us conclude
$$0 \neq {\frak L}_{g_1g_2}^{\bar i + \bar i_2} \subset
I.$$ We can argue in a similar way from $g_1g_2, g_3$ and
$g_1g_2g_3$ to get $$0 \neq {\frak L}_{g_1g_2g_3}^{\bar i + \bar
i_2 + \bar i_3} \subset I$$ for some ${\bar i_3} \in {\hu Z}_2$.
Following this process with the $\Sigma_G$-connection
$\{g_1,....,g_r\}$ we obtain that
\begin{equation}\label{1003}
0 \neq {\frak L}_{g_1g_2g_3 \cdots g_r}^{\bar p} \subset I,
\end{equation}
$\bar p = \bar i + \bar i_2 + \bar i_3 +\cdots + \bar i_r$, and so
either $0 \neq {\frak L}_{g'}^{\bar p} \subset I$ or $0 \neq
{\frak L}_{(g')^{-1}}^{\bar p} \subset I$ for some ${\bar p} \in
{\hu Z}_2$. That is, for any $g' \in \Sigma_G$ we have that
%\setminus \{g_0,g_0^{-1}
\begin{equation}\label{segunda}
\hbox{$0 \neq {\frak L}_{\xi}^{\bar p} \subset I$ for some $\xi
\in \{g', (g')^{-1}\}$ and some ${\bar p} \in {\hu Z}_2$.}
\end{equation}

Taking into account Lemma \ref{dimensiones} we can distinguish two
possibilities.

In the first one ${\rm dim}({\frak L}_g)=1$ for any $g \in
\Sigma_G$ and so equation (\ref{segunda}) gives us that, in this
first possibility,
\begin{equation}\label{tododentro1}
\hbox{ either $ {\frak L}_g \subset I$ or ${\frak L}_{g^{-1}}
\subset I$ for any $g \in \Sigma_G$.}
\end{equation}

In the second possibility, ${\rm dim}({\frak L}_g)=2$ for any $g
\in \Sigma_G$. Given now any $g' \in \Sigma_G \setminus
\{g_0,g_0^{-1}\}$ and the $\Sigma_G$-connection (\ref{cone}) from
$g_0$ to $g'$, the $\Sigma_G$-multiplicativity of ${\frak L}$ let
us also get in a first step $0 \neq [{\frak L}_{g_1}^{\bar i},
{\frak L}_{g_2}^{\bar i_2 + \bar 1}] = {\frak L}_{g_1g_2}^{\bar i
+ \bar i_2+ \bar 1} \subset I$, in a second step $0 \neq [{\frak
L}_{g_1g_2}^{\bar i + \bar i_2+ \bar 1}, {\frak L}_{g_3}^{\bar
i_3}] = {\frak L}_{g_1g_2g_3}^{\bar i + \bar i_2+ \bar i_3 + \bar
1} \subset I$, and finally
%by arguing as above with any ${\frak L}_j^{\bar I_{j}}$,
%$j=3,..,r$, that
$0 \neq {\frak L}_{g_1g_2g_3 \cdots g_r}^{\bar p + \bar 1} \subset
I.$ Taking into account equation (\ref{1003}), we have showed that
in this second possibility
\begin{equation}\label{otramaas}
\hbox{either ${\frak L}_{g'} \subset I$ or ${\frak L}_{(g')^{-1}}
\subset I$ for any $g' \in \Sigma_G \setminus \{g_0,g_0^{-1}\}$.}
\end{equation}
Observe that it remains to study if equation (\ref{otramaas}) also
holds for $g_0$. To do that, let us suppose the cardinal of
$\Sigma_G$, denoted by $|\Sigma_G|$, is greater than 2. Then there
exists some ${g'} \in \Sigma_G\setminus \{g_0,g_0^{-1}\}$ and, by
the above, a $\Sigma_G$-connection $\{g_0,g_2,....,g_r\}$ from
$g_0$ to $g'$ such that ${\frak L}_{g'}^{\bar 0} \oplus {\frak
L}_{g'}^{\bar 1} \subset I$ with any ${\frak L}_{g'}^{\bar i} \neq
0$ being also $0 \neq[[\cdots [[{\frak L}_{g_0}^{\bar i}, {\frak
L}_{g_2}^{\bar i_2}],{\frak L}_{g_3}^{\bar i_3}], \cdots], {\frak
L}_{g_r}^{\bar i_r}] = {\frak L}_{g'}^{\bar p}$, $\bar p = \bar i
+ \bar i_2 \cdots + \bar i_r$. From here, we also have the
$\Sigma_G$-connection $\{g_0g_2 \cdot \cdot \cdot g_r,
g_r^{-1},g_{r-1}^{-1},..., g_2^{-1}\} \subset \Sigma_G$ which
satisfies $g_0g_2 \cdot \cdot \cdot g_r, g_0g_2 \cdot \cdot \cdot
g_{r-1},..., g_0 \in \Sigma_G$ and $g_0g_2 \cdot \cdot \cdot g_r
=g'$.
% and $g_1=g_0$.
  By $\Sigma_G$-multiplicativity, and taking into account ${\frak L}_{{g'}}^{\bar 0} \oplus {\frak L}_{{g'}}^{\bar
1} \subset I$ with any ${\frak L}_{{g'}}^{\bar i} \neq 0$, $0\neq
[[...[[{\frak L}_{g_0g_2 \cdot \cdot \cdot g_r}^{\bar p + \bar 1},
{\frak L}_{g_{r}^{-1}}^{\bar i_r}],{\frak L}_{g_{r-1}^{-1}}^{\bar
i_{r-1}}],...],{\frak L}_{g_{2}^{-1}}^{\bar i_2}] ={\frak
L}_{g_0}^{\bar i + \bar 1} \subset I.$ From here ${\frak L}_{g_0}
\subset I$. Let us observe that the above argument also shows
that, (under the assumption $|\Sigma_G| > 2$),
\begin{equation}\label{noparte}
\hbox{in case ${\frak L}_{g_0}^{\bar i} \subset I$ for some $g_0
\in \Sigma_G$ and some ${\bar i} \in {\hu Z}_2$, then ${\frak
L}_{g_0} \subset I$.}
\end{equation}
Summarizing the above paragraphs, equations (\ref{tododentro1}),
(\ref{otramaas}) and (\ref{noparte}) let us assert that in case
$|\Sigma_G| > 2$ then
\begin{equation}\label{casifinal}
\hbox{ either $ {\frak L}_g \subset I$ or $ {\frak L}_{g^{-1}}
\subset I$ for any $g \in \Sigma_G$.}
\end{equation}

From now on we are also going to
%center our attention in the case in
%which
suppose $|\Sigma_G| > 2$, (the easy case in which $|\Sigma_G|< 2$
will be consider below in Lemma \ref{cardinal2}). Then equation
(\ref{casifinal}) let us denote by
$$\Sigma_I := \{g \in \Sigma_G : {\frak L}_g \subset I\}$$
and assert, taking into account the fact ${\frak L}_1 =
\sum\limits_{g \in \Sigma_G} [{\frak L}_g,{\frak L}_{g^{-1}}]$,
that
\begin{equation}\label{eq1}
{\frak L}_1 \subset I.
\end{equation}
Let us also denote by
$$J := \bigoplus\limits_{g' \in \Sigma_G \setminus
\Sigma_I} {\frak L}_{g'}.$$ Observe that equation (\ref{noparte}),
joint with the maximal lenght of ${\frak L}$ and the graded
character of ${\frak L}_{g'}$ and of $I$, let us assert that if
$g' \in \Sigma_G \setminus \Sigma_I$ then ${\frak L}_{g'} \cap I =
\{0\}$. From here, taking also into account equation (\ref{eq1}),
we can write
\begin{equation}\label{semeolvida}
I = {\frak L}_1 \oplus (\bigoplus\limits_{g \in \Sigma_I} {\frak
L}_g).
\end{equation}
% Furthermore, in the above context we can assert the
%next results.
%\margen{Aqui habra que aclarar lo de dimension 1 o 2}
\begin{lemma}\label{lemahafa}
%\margen{Repasar los signos en todas las super-Jacobi}
The following assertions hold.
\begin{itemize}
\item[{\rm (i)}] For any $g' \in \Sigma_G \setminus \Sigma_I$ and
${\bar i} \in {\hu Z}_2$, we have $[{\frak L}_1, {\frak
L}_{g'}^{\bar i}]=0$.

\item[{\rm (ii)}] For any $g', g'' \in \Sigma_G \setminus
\Sigma_I$ and ${\bar i}, {\bar j} \in {\hu Z}_2$, we have $[{\frak
L}_{g'}^{\bar i}, {\frak L}_{g''}^{\bar j}] \subset J$.

\item[{\rm (iii)}] For any $g \in \Sigma_I, g' \in \Sigma_G
\setminus \Sigma_I$ with $g' \neq g^{-1}$ and ${\bar i}, {\bar j}
\in {\hu Z}_2$, we have $[{\frak L}_g^{\bar i}, {\frak
L}_{g'}^{\bar j}] = 0$.
\end{itemize}
\end{lemma}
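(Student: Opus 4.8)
The plan is to prove each of the three assertions by exploiting the decomposition (\ref{semeolvida}), namely $I = {\frak L}_1 \oplus (\bigoplus_{g \in \Sigma_I} {\frak L}_g)$, together with the fact that $I$ is a graded ideal and the decomposition into graded subspaces. The unifying idea for all three parts is that certain products are forced to land in the intersection $I \cap (\bigoplus_{g' \in \Sigma_G \setminus \Sigma_I} {\frak L}_{g'})$, which must be zero since $I$ only contains the homogeneous spaces ${\frak L}_g$ with $g \in \Sigma_I$ (and ${\frak L}_1$). I would set up this dichotomy at the outset and then argue each item by computing where the relevant product sits.

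For item (i), I would argue by contradiction: suppose $[{\frak L}_1, {\frak L}_{g'}^{\bar i}] \neq 0$ for some $g' \in \Sigma_G \setminus \Sigma_I$. Since $I$ is an ideal and ${\frak L}_1 \subset I$ by (\ref{eq1}), we have $[{\frak L}_1, {\frak L}_{g'}^{\bar i}] \subset I$. But $[{\frak L}_1, {\frak L}_{g'}^{\bar i}] \subset {\frak L}_{g'}^{\bar i} \subset {\frak L}_{g'}$, and for $g' \notin \Sigma_I$ we have ${\frak L}_{g'} \cap I = \{0\}$ (the observation preceding (\ref{semeolvida})). This forces the product to be zero, a contradiction. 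For item (ii), the grading gives $[{\frak L}_{g'}^{\bar i}, {\frak L}_{g''}^{\bar j}] \subset {\frak L}_{g'g''}$; the content is to show $g'g'' \notin \Sigma_I$ whenever the product is nonzero, so that the product lands in $J$. Here I would use that ${\frak L}_{g'}, {\frak L}_{g''} \cap I = 0$ while $I$ is an ideal: if the product were nonzero and $g'g'' \in \Sigma_I$, then ${\frak L}_{g'g''} \subset I$, and I would push back via the ideal property to deduce that ${\frak L}_{g'}$ or ${\frak L}_{g''}$ meets $I$ nontrivially, contradicting $g', g'' \notin \Sigma_I$.

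For item (iii), with $g \in \Sigma_I$, $g' \in \Sigma_G \setminus \Sigma_I$, $g' \neq g^{-1}$, the product sits in ${\frak L}_{gg'}$. Suppose it is nonzero. Since ${\frak L}_g \subset I$ and $I$ is an ideal, $[{\frak L}_g^{\bar i}, {\frak L}_{g'}^{\bar j}] \subset I$, so if this is nonzero then $gg' \in \Sigma_I$, giving ${\frak L}_{gg'} \subset I$. The key move is then to bracket back: because $g' \neq g^{-1}$ we have $gg' \neq 1$, and I would apply $\Sigma_G$-multiplicativity together with the super Jacobi identity to produce a nonzero bracket $[{\frak L}_{gg'}, {\frak L}_{g^{-1}}]$ landing in ${\frak L}_{g'}$, which would force ${\frak L}_{g'} \cap I \neq 0$, contradicting $g' \notin \Sigma_I$. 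The main obstacle I anticipate is in item (iii): controlling the ${\hu Z}_2$-degrees so that the multiplicativity hypothesis genuinely applies (it guarantees nonvanishing only for products $[{\frak L}_h^{\bar i}, {\frak L}_{h'}^{\bar j}]$ when the homogeneous summands are present), and ensuring the back-bracketing recovers a nonzero element of ${\frak L}_{g'}$ rather than collapsing. I would handle this by invoking maximal length, so that each nonzero ${\frak L}_h^{\bar i}$ is one-dimensional and the nonvanishing products are in fact equalities ${\frak L}_{hh'}^{\bar i + \bar j}$, exactly as in the computations preceding the lemma, which lets the chain of brackets be run backwards without ambiguity.
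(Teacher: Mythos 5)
Your proposal is correct and takes essentially the same approach as the paper's own proof: item (i) follows from ${\frak L}_1 \subset I$ (equation (\ref{eq1})) together with ${\frak L}_{g'} \cap I = \{0\}$ for $g' \in \Sigma_G \setminus \Sigma_I$, and items (ii) and (iii) use exactly the paper's back-bracketing step, where $\Sigma_G$-multiplicativity and maximal length force $[{\frak L}_{g'g''}^{\bar i + \bar j}, {\frak L}_{(g'')^{-1}}^{\bar j}] = {\frak L}_{g'}^{\bar i} \subset I$, contradicting $g' \notin \Sigma_I$. The only cosmetic differences are that the paper invokes equation (\ref{noparte}) directly where you use its stated consequence ${\frak L}_{g'} \cap I = \{0\}$, and that your appeal to the super Jacobi identity in (iii) is superfluous, since multiplicativity plus maximal length already suffice there, as in the paper.
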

\begin{proof}
(i) Suppose $[{\frak L}_1, {\frak L}_{g'}^{\bar i}] \neq 0$, then
$0 \neq [{\frak L}_1^{\bar k}, {\frak L}_{g'}^{\bar i}] = {\frak
L}_{g'}^{\bar k +\bar i}$ for some ${\bar k} \in {\hu Z}_2$.
Equation (\ref{eq1}) gives us ${\frak L}_{g'}^{\bar k +\bar i}
\subset I$ and so, taking into account equation (\ref{noparte}),
$g' \in \Sigma_I$, a contradiction. Hence $[{\frak L}_1, {\frak
L}_{g'}^{\bar i}] = 0$.

\medskip

(ii) If $[{\frak L}_{g'}^{\bar i}, {\frak L}_{g''}^{\bar j}] = 0$
then $[{\frak L}_{g'}^{\bar i}, {\frak L}_{g''}^{\bar j}] \subset
J$. So, let consider the case $[{\frak L}_{g'}^{\bar i}, {\frak
L}_{g''}^{\bar j}] \neq 0$, being then $$0 \neq [{\frak
L}_{g'}^{\bar i}, {\frak L}_{g''}^{\bar j}]={\frak
L}_{g'g''}^{\bar i + \bar j},$$ and suppose $g'g'' \in \Sigma_I$.
By $\Sigma_G$-multiplicativity, $[{\frak L}_{g'g''}^{\bar i+ \bar
j},{\frak L}_{(g'')^{-1}}^{\bar j}] = {\frak L}_{g'}^{\bar
i}\subset I$. From here, $g' \in \Sigma_I$, a contradiction. We
conclude $g'g'' \in \Sigma_G \setminus \Sigma_I$ and so $[{\frak
L}_{g'}^{\bar i}, {\frak L}_{g''}^{\bar j}] \subset J.$

\medskip

(iii) If $[{\frak L}_g^{\bar i}, {\frak L}_{g'}^{\bar j}] \neq 0$,
we have as in (ii) that $g' \in \Sigma_I$ which is a
contradiction. Hence, $[{\frak L}_g^{\bar i}, {\frak L}_{g'}^{\bar
j}] = 0$.
\end{proof}

\begin{lemma}\label{lemahamuchafa}
If $g \in \Sigma_I$ and $g^{-1} \in \Sigma_G \setminus \Sigma_I$
then $[{\frak L}_g^{\bar i}, {\frak L}_{g^{-1}}^{\bar j}] = 0$ for
any $\bar i, \bar j \in {\hu Z}_2$.

\end{lemma}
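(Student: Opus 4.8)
The plan is to argue by contradiction. Suppose $g \in \Sigma_I$ and $g^{-1} \in \Sigma_G \setminus \Sigma_I$, but $[{\frak L}_g^{\bar i}, {\frak L}_{g^{-1}}^{\bar j}] \neq 0$ for some $\bar i, \bar j \in {\hu Z}_2$. Since this bracket lands in ${\frak L}_{gg^{-1}} = {\frak L}_1$, we obtain a nonzero contribution to ${\frak L}_1$ coming from the pair $g, g^{-1}$. The idea is to use this nonzero product inside ${\frak L}_1$, together with the fact that $g \in \Sigma_I$ forces ${\frak L}_g \subset I$, to conclude that $g^{-1}$ must also lie in $\Sigma_I$, contradicting our hypothesis.

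Let me look at the structure. Since $g \in \Sigma_I$ we have ${\frak L}_g \subset I$ by definition of $\Sigma_I$, and in particular ${\frak L}_g^{\bar i} \subset I$. Because $I$ is an ideal, $[{\frak L}_g^{\bar i}, {\frak L}_{g^{-1}}^{\bar j}] \subset I$, so the nonzero element $[{\frak L}_g^{\bar i}, {\frak L}_{g^{-1}}^{\bar j}] = {\frak L}_1^{\bar i + \bar j}$ (using maximal length, the product exhausts the one-dimensional homogeneous component) sits in $I \cap {\frak L}_1$. But we already know ${\frak L}_1 \subset I$ from equation (\ref{eq1}), so this alone gives no contradiction. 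The key step must instead feed this product back through $\Sigma_G$-multiplicativity: since $gg^{-1} = 1 \notin \Sigma_G$, I cannot apply multiplicativity directly to $g$ and $g^{-1}$. First I would try to select an auxiliary element. Because $|\Sigma_G| > 2$, there is some $h \in \Sigma_G$ with $h \notin \{g, g^{-1}\}$, and by (\ref{casifinal}) either ${\frak L}_h \subset I$ or ${\frak L}_{h^{-1}} \subset I$.

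The main step will be to produce a nonzero product that reconstructs ${\frak L}_{g^{-1}}$ inside $I$. Consider the element $[{\frak L}_g^{\bar i}, {\frak L}_{g^{-1}}^{\bar j}]$, which we may assume equals the full homogeneous line ${\frak L}_1^{\bar i + \bar j}$ by maximal length. Applying super Jacobi identity to the triple $[{\frak L}_g^{\bar i}, {\frak L}_{g^{-1}}^{\bar j}]$ bracketed with a suitable ${\frak L}_{g'}$, or more directly, I would exploit that ${\frak L}_1^{\bar i + \bar j}$ acts back on ${\frak L}_{g^{-1}}$. The cleanest route is to observe that $[{\frak L}_1, {\frak L}_{g^{-1}}] \subset {\frak L}_{g^{-1}}$, and to try to show that this product is nonzero and lands in $I$, which would give ${\frak L}_{g^{-1}}^{\bar p} \subset I$ for some $\bar p$, hence $g^{-1} \in \Sigma_I$ by (\ref{noparte}) --- the desired contradiction. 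To obtain nonvanishing of $[{\frak L}_1, {\frak L}_{g^{-1}}]$, I would apply super Jacobi to $[[{\frak L}_g^{\bar i}, {\frak L}_{g^{-1}}^{\bar j}], {\frak L}_{g^{-1}}^{\bar k}]$ and use that ${\mathcal Z}({\frak L}) = 0$ together with $\Sigma_G$-multiplicativity to rule out simultaneous vanishing.

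The hard part will be controlling the nonvanishing of the reconstructed product: super Jacobi identity only guarantees that \emph{some} term in an expansion is nonzero, and I must ensure that the surviving term is one whose homogeneous component lies in $I$ rather than one involving only ${\frak L}_{g^{-1}}$ brackets that could escape $I$. I expect to handle this by expanding $[{\frak L}_{g^{-1}}^{\bar j}, [{\frak L}_g^{\bar i}, {\frak L}_{g^{-1}}^{\bar k}]]$ via super Jacobi: one term is $[[{\frak L}_{g^{-1}}^{\bar j}, {\frak L}_g^{\bar i}], {\frak L}_{g^{-1}}^{\bar k}] = [{\frak L}_1^{\bar i + \bar j}, {\frak L}_{g^{-1}}^{\bar k}]$, and the other is $\pm [{\frak L}_g^{\bar i}, [{\frak L}_{g^{-1}}^{\bar j}, {\frak L}_{g^{-1}}^{\bar k}]]$, where the inner bracket sits in ${\frak L}_{g^{-2}}$. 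A careful case analysis on whether $g^{-2} \in \Sigma_G$, combined with $\Sigma_G$-multiplicativity, maximal length, and the centerless hypothesis, should force $[{\frak L}_1, {\frak L}_{g^{-1}}] \neq 0$ and hence ${\frak L}_{g^{-1}} \subset I$ via (\ref{noparte}), contradicting $g^{-1} \in \Sigma_G \setminus \Sigma_I$ and completing the proof.
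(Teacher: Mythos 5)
There is a genuine gap, and it sits exactly at the step you yourself flag as ``the hard part'': your plan is to force $[{\frak L}_1, {\frak L}_{g^{-1}}] \neq 0$ and then conclude $g^{-1} \in \Sigma_I$. But that bracket is \emph{identically zero} under the hypotheses of the lemma, so no amount of super Jacobi manipulation can produce a nonzero element in it. Indeed, this is precisely Lemma \ref{lemahafa}-(i) applied to $g' = g^{-1} \in \Sigma_G \setminus \Sigma_I$; equivalently, since ${\frak L}_1 \subset I$ by equation (\ref{eq1}), any nonzero product in $[{\frak L}_1, {\frak L}_{g^{-1}}]$ would lie in $I \cap {\frak L}_{g^{-1}}$ and force $g^{-1} \in \Sigma_I$ via (\ref{noparte}) --- so the vanishing of $[{\frak L}_1,{\frak L}_{g^{-1}}]$ is an already-established consequence of $g^{-1} \notin \Sigma_I$, not something whose failure you can derive from $[{\frak L}_g^{\bar i}, {\frak L}_{g^{-1}}^{\bar j}] \neq 0$. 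Concretely, when you expand $[[{\frak L}_g^{\bar i}, {\frak L}_{g^{-1}}^{\bar j}], {\frak L}_{g^{-1}}^{\bar k}]$ by super Jacobi, the left-hand side and the term $[[{\frak L}_g^{\bar i},{\frak L}_{g^{-1}}^{\bar k}],{\frak L}_{g^{-1}}^{\bar j}]$ both lie in $[{\frak L}_1,{\frak L}_{g^{-1}}]=0$, so the identity only yields the further vanishing $[{\frak L}_g^{\bar i},[{\frak L}_{g^{-1}}^{\bar j},{\frak L}_{g^{-1}}^{\bar k}]]=0$; it never produces nonvanishing. The centerless hypothesis guarantees that $z := [{\frak L}_g^{\bar i},{\frak L}_{g^{-1}}^{\bar j}]$, if nonzero, acts nontrivially \emph{somewhere}, but it does not let you choose where, and the one place you want it to act ($ {\frak L}_{g^{-1}}$) is excluded.

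The paper's proof goes the opposite way: it first shows (equation (\ref{valecero})) that $[z, {\frak L}_h] = 0$ for every $h \in \Sigma_G \setminus \{g\}$, so centerlessness together with ${\frak L}_1 = \sum_{g}[{\frak L}_g,{\frak L}_{g^{-1}}]$ pins the nontrivial action on ${\frak L}_g$ itself, giving $0 \neq [z,{\frak L}_g^{\bar k}] = {\frak L}_g^{\bar i+\bar j+\bar k}$. A further Jacobi argument (equation (\ref{porfin})) then shows this component of ${\frak L}_g$ brackets trivially with every ${\frak L}_l$, $l \neq g^{-1}$; finally, since $|\Sigma_G| > 2$, a $\Sigma_G$-connection from $g$ to some third element $h$ together with $\Sigma_G$-multiplicativity produces $[{\frak L}_g^{\bar t}, {\frak L}_{g_2}^{\bar p}] \neq 0$ with $g_2 \neq g^{-1}$ --- the contradiction. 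Your proposal never reaches this mechanism. A secondary error: you assert $[{\frak L}_g^{\bar i}, {\frak L}_{g^{-1}}^{\bar j}] = {\frak L}_1^{\bar i+\bar j}$ ``by maximal length,'' but maximal length only bounds $\dim {\frak L}_h^{\bar i}$ for $h \in \Sigma_G$, i.e.\ for $h \neq 1$; the identity component ${\frak L}_1$ may have arbitrary dimension, so that equality is unjustified (though it is not where the argument breaks).
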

\begin{proof}
Let us consider $g \in \Sigma_I$ and $g^{-1} \in \Sigma_G
\setminus \Sigma_I$. Lemma \ref{lemahafa}-(i) gives us $[[{\frak
L}_g^{\bar i}, {\frak L}_{g^{-1}}^{\bar j}], {\frak L}_h^{\bar k}]
= 0$ in case $h \in \Sigma_G \setminus \Sigma_I$ for any $\bar i,
\bar j, \bar k \in {\hu Z}_2$. Consider now any $f \in \Sigma_I$
such that $f \neq g$ and any $\bar s \in {\hu Z}_2$, then we have
$$[[{\frak L}_g^{\bar i}, {\frak L}_{g^{-1}}^{\bar j}], {\frak
L}_f^{\bar s}] \subset [[{\frak L}_{g^{-1}}^{\bar j}, {\frak
L}_f^{\bar s}],{\frak L}_g^{\bar i}] + [[{\frak L}_f^{\bar s},
{\frak L}_g^{\bar i}], {\frak L}_{g^{-1}}^{\bar j}].$$ Taking into
account that $f \in \Sigma_I$ and that in case $[{\frak L}_f^{\bar
s}, {\frak L}_g^{\bar i}] \neq 0$ then $fg \in \Sigma_I$, Lemma
\ref{lemahafa}-(iii) shows that both above summands are zero and
so $[[{\frak L}_g^{\bar i}, {\frak L}_{g^{-1}}^{\bar j}], {\frak
L}_f^{\bar s}] = 0$. That is,
\begin{equation}\label{valecero}
\hbox{$[[{\frak L}_g^{\bar i}, {\frak L}_{g^{-1}}^{\bar j}],
{\frak L}_h^{\bar k}] = 0$ for any $h \in \Sigma_G \setminus
\{g\}$ and $\bar k \in {\hu Z}_2$.}
\end{equation}
 %By taking again into account ${\mathcal{Z}}({\frak L})=0$, the
%fact ${\frak L}_1 = \sum\limits_{g \in \Sigma_G}[{\frak
%L}_{g},{\frak L}_{g^{-1}}],$ and super Jacobi identity, there
%exist $f \in \Sigma_G$ and $\bar s \in {\hu Z}_2$ such that

We also have for any $l \in \Sigma_G$ and $\bar s \in {\hu Z}_2$
that
\begin{equation}\label{sumovarios}
[[[{\frak L}_g^{\bar i}, {\frak L}_{g^{-1}}^{\bar j}], {\frak
L}_g^{\bar k}], {\frak L}_l^{\bar s}] \subset [[{\frak L}_g^{\bar
k}, {\frak L}_l^{\bar s}],[{\frak L}_g^{\bar i}, {\frak
L}_{g^{-1}}^{\bar j}]]  +[[[{\frak L}_g^{\bar i}, {\frak
L}_{g^{-1}}^{\bar j}], {\frak L}_l^{\bar s}], {\frak L}_g^{\bar
k}].
\end{equation}
Consider the second summand in equation (\ref{sumovarios}). From
equation (\ref{valecero}), $$[[[{\frak L}_g^{\bar i}, {\frak
L}_{g^{-1}}^{\bar j}], {\frak L}_l^{\bar s}], {\frak L}_g^{\bar
k}] = 0$$ up to maybe for $l = g$. In this case, if $$0 \neq
[[[{\frak L}_g^{\bar i}, {\frak L}_{g^{-1}}^{\bar j}], {\frak
L}_g^{\bar s}], {\frak L}_g^{\bar k}] \subset {\frak
L}_{g^2}^{\bar i + \bar j + \bar s + \bar k},$$ being then ${\frak
L}_{g^2}^{\bar i + \bar j + \bar s + \bar k} \subset I$. We have
by $\Sigma_G$-multiplicativity $0 \neq [{\frak L}_{g^2}^{\bar i +
\bar j + \bar s + \bar k}, {\frak L}_{g^{-1}}^{\bar j}]$, but
Lemma \ref{lemahafa}-(iii) shows $[{\frak L}_{g^2}^{\bar i + \bar
j + \bar s + \bar k}, {\frak L}_{g^{-1}}^{\bar j}] = 0$, a
contradiction. Hence $$[[[{\frak L}_g^{\bar i}, {\frak
L}_{g^{-1}}^{\bar j}], {\frak L}_g^{\bar s}], {\frak L}_g^{\bar
k}] = 0,$$ and so
\begin{equation}\label{prizero}
\hbox{ $[[[{\frak L}_g^{\bar i}, {\frak L}_{g^{-1}}^{\bar j}],
{\frak L}_l^{\bar s}], {\frak L}_g^{\bar k}] = 0$ for any $l \in
\Sigma_G$.}
\end{equation}
Consider now the first summand in equation (\ref{sumovarios}). We
have $$[[{\frak L}_g^{\bar k}, {\frak L}_l^{\bar s}],[{\frak
L}_g^{\bar i}, {\frak L}_{g^{-1}}^{\bar j}]] \subset [{\frak
L}_{gl}^{\bar k + \bar s}, [{\frak L}_g^{\bar i}, {\frak
L}_{g^{-1}}^{\bar j}]].$$ But in case $l \neq g^{-1}$ we can
assert $[{\frak L}_{gl}^{\bar k + \bar s}, [{\frak L}_g^{\bar i},
{\frak L}_{g^{-1}}^{\bar j}]] = 0.$ Indeed, in the opposite case,
that is,
\begin{equation}\label{harto}
[{\frak L}_{gl}^{\bar k + \bar s}, [{\frak L}_g^{\bar i}, {\frak
L}_{g^{-1}}^{\bar j}]] \neq 0,
\end{equation}
we have that $gl \neq 1$ with $0 \neq {\frak L}_{gl}^{\bar k +
\bar s} \subset I$, and by super Jacobi identity and Lemma
\ref{lemahafa}-(iii) that $[[{\frak L}_{gl}^{\bar k + \bar s},
{\frak L}_g^{\bar i}], {\frak L}_{g^{-1}}^{\bar j}] \neq 0$, which
implies
\begin{equation}\label{vino}
[{\frak L}_{g^2l}^{\bar k + \bar s + \bar i}, {\frak
L}_{g^{-1}}^{\bar j}] \neq 0.
\end{equation}
Since by equation (\ref{harto}) and Lemma \ref{lemahafa}-(i), $l
\neq g^{-2}$ and so $g^2l \neq 1$, we have ${g^2l} \in \Sigma_I$
and $g^{-1} \in \Sigma_G \setminus \Sigma_I$. Then by Lemma
\ref{lemahafa}-(iii) we get $[{\frak L}_{g^2l}^{\bar k + \bar s +
\bar i}, {\frak L}_{g^{-1}}^{\bar j}] = 0$, which contradicts
equation (\ref{vino}). So $[{\frak L}_{gl}^{\bar k + \bar s},
[{\frak L}_g^{\bar i}, {\frak L}_{g^{-1}}^{\bar j}]] = 0$ and
hence
\begin{equation}\label{muuu}
\hbox{$[[{\frak L}_g^{\bar k}, {\frak L}_l^{\bar s}],[{\frak
L}_g^{\bar i}, {\frak L}_{g^{-1}}^{\bar j}]]=0$ if $l \neq
g^{-1}$.}
\end{equation}
Taking into account equations (\ref{sumovarios}), (\ref{prizero})
and (\ref{muuu}) we have showed that
\begin{equation}\label{porfin}
[[[{\frak L}_g^{\bar i}, {\frak L}_{g^{-1}}^{\bar j}], {\frak
L}_g^{\bar k}], {\frak L}_l^{\bar s}] = 0
\end{equation}
for any $l \in \Sigma_G \setminus \{g^{-1}\}$ and $\bar i, \bar j,
\bar k, \bar s \in {\hu Z}_2.$

Suppose now $[{\frak L}_g^{\bar i}, {\frak L}_{g^{-1}}^{\bar j}]
\neq 0$ for some $\bar i, \bar j \in {\hu Z}_2$. Since
${\mathcal{Z}}({\frak L}) = 0$ and taking into account ${\frak
L}_1 = \sum\limits_{g \in \Sigma_G}[{\frak L}_g,{\frak
L}_{g^{-1}}]$ and super Jacobi identity, there exist $h \in
\Sigma_G$ and $\bar k \in {\hu Z}_2$ such that $[[{\frak
L}_g^{\bar i}, {\frak L}_{g^{-1}}^{\bar j}], {\frak L}_h^{\bar k}]
\neq 0.$ By equation (\ref{valecero}) necessarily $h = g$ and so
\begin{equation}\label{otramas}
0 \neq [[{\frak L}_g^{\bar i}, {\frak L}_{g^{-1}}^{\bar j}],
{\frak L}_g^{\bar k}] = {\frak L}_g^{\bar i + \bar j + \bar k}.
\end{equation}
Hence, equation (\ref{porfin}) gives us $[{\frak L}_g^{\bar i +
\bar j + \bar k}, {\frak L}_l^{\bar s}] = 0$ for any $l \in
\Sigma_G \setminus \{g^{-1}\}$ and $\bar s \in {\hu Z}_2.$ That
is, we have showed that
\begin{equation}\label{casiacabo}
\hbox{in case $[{\frak L}_g^{\bar i}, {\frak L}_{g^{-1}}^{\bar j}]
\neq 0$ then $[{\frak L}_g^{\bar t}, {\frak L}_l^{\bar s}] = 0$}
\end{equation}
with ${\frak L}_g^{\bar t} \neq 0, \bar t \in {\hu Z}_2$, and for
any $l \in \Sigma_G \setminus \{g^{-1}\}$ and  any $\bar s \in
{\hu Z}_2$. However, since the cardinal of $\Sigma_G$ is greater
than 2, there exists $h \in \Sigma_G \setminus \{g, g^{-1 }\}$.
From here, we can take
$$\{g_1, g_2,..., g_r\} \subset \Sigma_G$$ a $\Sigma_G$-connection from $g$ to
$h$, (which in particular implies $g_2 \neq g_1^{-1}$),
%(hence $g_1=g$;  $g_1g_2, g_1g_2g_3,..., g_1g_2\cdots g_{r-1} \in
%\Sigma_G$ and $g_1g_2\cdots g_r \in \{h, h^{-1}\}$),
satisfying $r \geq 2$. Then we have $g_1 = g \in \Sigma_G^{\bar
t}, g_2 \in \Sigma_G^{\bar p}$ for some $\bar p \in {\hu Z}_2$,
and $g_1g_2 \in \Sigma_G$. Hence, the $\Sigma_G$-multiplicativity
of ${\frak L}$ gives us
$$[{\frak L}_g^{\bar t}, {\frak L}_{g_2}^{\bar p}]
\neq 0$$ with $g_2 \neq g^{-1}$, a contradiction with equation
(\ref{casiacabo}). Hence $[{\frak L}_g^{\bar i}, {\frak
L}_{g^{-1}}^{\bar j}] = 0$ for any $\bar i, \bar j \in {\hu Z}_2$
and the proof is complete.
\end{proof}

\begin{lemma}\label{lema_util}
If $g, g^{-1} \in \Sigma_I$ for some $g \in \Sigma_G$ then $I =
{\frak L}$.
\end{lemma}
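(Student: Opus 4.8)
The plan is to prove the equivalent statement $\Sigma_I = \Sigma_G$, since together with ${\frak L}_1 \subset I$ from equation (\ref{eq1}) this forces $I = {\frak L}$. Fix $g \in \Sigma_G$ with $g, g^{-1} \in \Sigma_I$ and take an arbitrary $h \in \Sigma_G$; the goal is ${\frak L}_h \subset I$. The case $h \in \{g, g^{-1}\}$ is immediate from the hypothesis, so assume $h \neq g, g^{-1}$. Since all elements of $\Sigma_G$ are $\Sigma_G$-connected, there is a $\Sigma_G$-connection $\{g_1, \ldots, g_r\}$ from $g$ to $h$ with $g_1 = g$, with $r \geq 2$, with partial products $p_k := g_1 g_2 \cdots g_k \in \Sigma_G$ for $k < r$, and with $p_r = g_1 \cdots g_r \in \{h, h^{-1}\}$.

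The first key step is a propagation argument along the connection, identical in spirit to the computation preceding equation (\ref{segunda}). Starting from ${\frak L}_{g_1} = {\frak L}_g \subset I$, I would show inductively that $p_k \in \Sigma_I$ for every $k$: assuming ${\frak L}_{p_{k-1}} \subset I$, choose homogeneous degrees $\bar a, \bar b$ with ${\frak L}_{g_k}^{\bar a} \neq 0$ and ${\frak L}_{p_{k-1}}^{\bar b} \neq 0$; then $\Sigma_G$-multiplicativity together with maximal length gives $0 \neq [{\frak L}_{p_{k-1}}^{\bar b}, {\frak L}_{g_k}^{\bar a}] = {\frak L}_{p_k}^{\bar a + \bar b}$, and since $I$ is an ideal this nonzero homogeneous subspace lies in $I$, whence ${\frak L}_{p_k} \subset I$ by equation (\ref{noparte}). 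In particular $p_r \in \Sigma_I$, so at least one of $h, h^{-1}$ lies in $\Sigma_I$.

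The remaining difficulty, and the point where the full hypothesis $g, g^{-1} \in \Sigma_I$ (rather than just $g \in \Sigma_I$) is used, is to secure the \emph{other} element of the pair $\{h, h^{-1}\}$. Here I would exploit that $G$ is abelian and that $\Sigma_G$ is symmetric: inverting every entry produces the sequence $\{g_1^{-1}, \ldots, g_r^{-1}\}$, whose first term is $g^{-1}$, whose partial products are $p_k^{-1} \in \Sigma_G$ (by symmetry of $\Sigma_G$, for $k < r$), and whose total product is $(g_1 \cdots g_r)^{-1} = p_r^{-1}$. Thus it is a valid $\Sigma_G$-connection from $g^{-1}$ to $h$. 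Running the same propagation argument along it, now based at ${\frak L}_{g^{-1}} \subset I$, gives $p_k^{-1} \in \Sigma_I$ for all $k$, and in particular $p_r^{-1} \in \Sigma_I$. Since $\{p_r, p_r^{-1}\} = \{h, h^{-1}\}$, both $h$ and $h^{-1}$ belong to $\Sigma_I$, so in particular ${\frak L}_h \subset I$. As $h \in \Sigma_G$ was arbitrary, this yields $\Sigma_I = \Sigma_G$ and therefore $I = {\frak L}$. I expect the only delicate point to be checking that the inverted sequence genuinely satisfies all three defining conditions of a $\Sigma_G$-connection, which is precisely where the symmetry of $\Sigma_G$ and the commutativity of $G$ are needed.
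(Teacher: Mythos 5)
Your proof is correct: every fact you invoke --- equations (\ref{eq1}) and (\ref{noparte}), the $\Sigma_G$-multiplicativity, the maximal length, the symmetry of $\Sigma_G$ and the standing assumption $|\Sigma_G|>2$ --- is available at this point of the paper, and the point you flag as delicate (that the entrywise-inverted sequence $\{g_1^{-1},\ldots,g_r^{-1}\}$ is a genuine $\Sigma_G$-connection from $g^{-1}$ to $h$, which uses commutativity of $G$ and symmetry of $\Sigma_G$) is exactly right and correctly handled. The paper uses the same propagation technique but organizes it differently, and the difference is worth noting. The paper starts from a connection $\{g_1,\ldots,g_n\}$ going from the target $g'$ \emph{to} $g$, and then \emph{reverses} it into $\{g_1\cdots g_n,\, g_n^{-1},\ldots,g_2^{-1}\}$: this reversed connection begins at $g_1\cdots g_n\in\{g,g^{-1}\}$ --- so by the hypothesis its entire starting space lies in $I$ --- and its total product is exactly $g_1=g'$, so a single propagation lands precisely on ${\frak L}_{g'}$ and the $h$-versus-$h^{-1}$ ambiguity that forces you to run a second, inverted connection never arises. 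In exchange, the paper does not lean on equation (\ref{noparte}) to upgrade one homogeneous component to the whole of ${\frak L}_{g'}$; instead it invokes the dichotomy of Lemma \ref{dimensiones} and, in the case $\dim{\frak L}_h=2$, runs the propagation twice, once from ${\frak L}_{g_1\cdots g_n}^{\bar 0}$ and once from ${\frak L}_{g_1\cdots g_n}^{\bar 1}$, to capture both homogeneous components of ${\frak L}_{g'}$. In short: you trade the paper's reversal trick for a forward propagation plus an inverted companion connection, and you trade the paper's explicit parity bookkeeping for a citation of (\ref{noparte}), which makes your write-up a bit shorter; both arguments are sound, and both make essential use of the full hypothesis $g,g^{-1}\in\Sigma_I$ (the paper to seed a single propagation whose starting point is only known to lie in $\{g,g^{-1}\}$, you to seed two parallel propagations).
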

\begin{proof}
Given any $g' \in \Sigma_G \setminus \{g, g^{-1}\}$ there exists a
$\Sigma_G$-connection $\{g_1,...,g_n\}$ with $g_1 = g'$ and $g_1
\cdots g_n \in \{g,g^{-1}\}$. From here, we can consider the
$\Sigma_G$-connection $\{g_1 \cdots g_n,g_n^{-1},...,g_2^{-1}\}$
from $g_1 \cdots g_n$ to $g$. By Lemma \ref{dimensiones} we can
distinguish two possibilities. In the first one ${\rm dim}({\frak
L}_h) = 1$ for any $h \in \Sigma_G$ and so ${\frak L}_{g_1 \cdots
g_n} = {\frak L}_{g_1 \cdots g_n}^{\bar i}$ for some $\bar i \in
{\hu Z}_2$. The $\Sigma_G$-multiplicativity of ${\frak L}$ let us
assert
$$[[...[[{\frak L}_{g_1 \cdots g_n}^{\bar i}, {\frak L}_{g_n^{-1}}^{\bar i_1}], {\frak
L}_{g_{n-1}^{-1}}^{\bar i_2}],...], {\frak L}^{\bar
i_n}_{{g_2}^{-1}}] = {\frak L}_{g'}^{\bar j} = {\frak L}_{g'},$$
for some $\bar i_1, \bar i_2, ..., \bar i_n, \bar j \in {\hu
Z}_2$. Since ${\frak L}_g, {\frak L}_{g^{-1}} \subset I$ and so
${\frak L}_{g_1 \cdots g_n}^{\bar i} \subset I$ we get ${\frak
L}_{g'} \subset I$. This fact joint with equation (\ref{eq1}) let
us conclude $I = {\frak L}$. In the second possibility, ${\rm
dim}({\frak L}_h) = 2$ for any $h \in \Sigma_G$. The
$\Sigma_G$-multiplicativity of ${\frak L}$ let us now assert
$$[[...[[{\frak L}_{g_1 \cdots g_n}^{\bar 0}, {\frak L}_{g_n^{-1}}^{\bar i_1}], {\frak
L}_{g_{n-1}^{-1}}^{\bar i_2}],...], {\frak L}^{\bar
i_n}_{{g_2}^{-1}}] = {\frak L}_{g'}^{\bar j},$$ for some $\bar
i_1, \bar i_2, ..., \bar i_n, \bar j \in {\hu Z}_2$. As above, the
fact ${\frak L}_g, {\frak L}_{g^{-1}} \subset I$ implies ${\frak
L}_{g_1 \cdots g_n}^{\bar 0} \subset I$ and then ${\frak
L}_{g'}^{\bar j} \subset I$. In a similar way we have
$$[[...[[{\frak L}_{g_1 \cdots g_n}^{\bar 1}, {\frak
L}_{g_n^{-1}}^{\bar i_1}], {\frak L}_{g_{n-1}^{-1}}^{\bar
i_2}],...],{\frak L}^{\bar i_n}_{{g_2}^{-1}}] = {\frak
L}_{g'}^{\bar j+ \bar 1},$$ and so ${\frak L}_{g'}^{\bar j + \bar
1} \subset I$. Hence ${\frak L}_{g'} \subset I$ and, taking also
into account equation (\ref{eq1}), $I = {\frak L}$.
\end{proof}

\begin{theorem}\label{lema4I}
Let ${\frak L}$ be of maximal length, $\Sigma_G$-multiplicative,
with ${\mathcal{Z}}({\frak L}) = 0$ and satisfying ${\frak L}_1 =
\sum\limits_{g \in \Sigma_G}[{\frak L}_g,{\frak L}_{g^{-1}}]$. If
$|\Sigma_G| > 2$ and $\Sigma_G$ has all of its elements
$\Sigma_G$-connected, then the following assertion hold.
\begin{enumerate}
\item Either ${\frak L}$ is gr-simple or

\item ${\frak L} = I \oplus J$ with $I, J$ gr-simple graded ideals
of ${\frak L}$ satisfying $[I,J] = 0$.
\end{enumerate}
\end{theorem}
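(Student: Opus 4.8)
The plan is to analyze a nonzero graded ideal $I$ of ${\frak L}$ and show it is gr-simple, with its complement (also an ideal) being either zero or gr-simple. Since $|\Sigma_G| > 2$ and all elements are $\Sigma_G$-connected, equation (\ref{semeolvida}) lets me write any nonzero graded ideal as $I = {\frak L}_1 \oplus (\bigoplus_{g \in \Sigma_I}{\frak L}_g)$, where $\Sigma_I = \{g \in \Sigma_G : {\frak L}_g \subset I\}$. I would first argue that if $I \neq {\frak L}$, then $J := \bigoplus_{g' \in \Sigma_G \setminus \Sigma_I}{\frak L}_{g'}$ is itself a graded ideal of ${\frak L}$. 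The key input is Lemma \ref{lema_util}, whose contrapositive shows that when $I \neq {\frak L}$ we cannot have both $g, g^{-1} \in \Sigma_I$; hence for every $g \in \Sigma_I$ its inverse $g^{-1}$ lies in $\Sigma_G \setminus \Sigma_I$, and by symmetry of $\Sigma_G$ the set $\Sigma_G \setminus \Sigma_I$ is closed under inverses.

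Next I would verify $J$ is an ideal using the three parts of Lemma \ref{lemahafa} together with Lemma \ref{lemahamuchafa}. Concretely, $[{\frak L}_1, J] \subset J$ follows from Lemma \ref{lemahafa}-(i) (in fact the bracket vanishes), and $[J, J] \subset J$ from Lemma \ref{lemahafa}-(ii). For the cross terms $[{\frak L}_g^{\bar i}, {\frak L}_{g'}^{\bar j}]$ with $g \in \Sigma_I$ and $g' \in \Sigma_G \setminus \Sigma_I$, Lemma \ref{lemahafa}-(iii) handles the case $g' \neq g^{-1}$, giving a zero product, while the remaining case $g' = g^{-1}$ is exactly Lemma \ref{lemahamuchafa}, which again gives zero (this is where the hypothesis that $g \in \Sigma_I$ but $g^{-1} \notin \Sigma_I$ is used). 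Thus $[I, J] = 0$, and since ${\frak L} = {\frak L}_1 \oplus (\bigoplus_{g \in \Sigma_G}{\frak L}_g) = I + J$ with $I \cap J = 0$ by construction, we obtain the direct decomposition ${\frak L} = I \oplus J$ with $[I,J] = 0$.

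It then remains to prove gr-simplicity of the summands. If ${\frak L}$ itself has no proper nonzero graded ideal we are in case (1). Otherwise take a proper nonzero graded ideal $I$ and form $J$ as above; both are nonzero graded ideals. To see $I$ is gr-simple, I would take any nonzero graded ideal $\tilde I$ of ${\frak L}$ contained in $I$ and show $\tilde I = I$. The point is that $\tilde I$ is a graded ideal of ${\frak L}$, so it too has the form (\ref{semeolvida}), namely $\tilde I = {\frak L}_1 \oplus (\bigoplus_{g \in \Sigma_{\tilde I}}{\frak L}_g)$ with $\Sigma_{\tilde I} \subset \Sigma_I$; since ${\frak L}_1 \subset \tilde I$ by equation (\ref{eq1}) and $\Sigma_I$ is a single connectedness class under the multiplicative propagation argument of Lemma \ref{lema_util}, any nonzero $\Sigma_{\tilde I}$ must exhaust $\Sigma_I$. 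More carefully, I would observe that $I$ equipped with its induced grading satisfies the same standing hypotheses (maximal length, $\Sigma_G$-multiplicativity, trivial center, and ${\frak L}_1$ generated by the brackets $[{\frak L}_g, {\frak L}_{g^{-1}}]$ restricted to $\Sigma_I$), and that $\Sigma_I$ remains $\Sigma_I$-connected; then a nonzero graded ideal of $I$ propagates through every homogeneous component by $\Sigma_G$-multiplicativity exactly as in Lemma \ref{lema_util}, forcing it to be all of $I$. The same reasoning applies verbatim to $J$.

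The main obstacle I anticipate is checking that $I$ (and $J$) genuinely inherit all the structural hypotheses needed to rerun the propagation argument internally, in particular that $\Sigma_I$ is closed under the connection operations and that the center of $I$ as an algebra in its own right is still trivial; the latter uses ${\mathcal Z}({\frak L}) = 0$ together with $[I,J]=0$, so that an element central in $I$ would be central in all of ${\frak L} = I \oplus J$. Once these inheritance facts are in place, the gr-simplicity of each summand follows from the maximal-length multiplicative propagation already developed in Lemma \ref{lema_util}, and the theorem's dichotomy is established.
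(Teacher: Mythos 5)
Your outline follows the paper's own strategy (the same complement $J:=\bigoplus_{g'\in\Sigma_G\setminus\Sigma_I}{\frak L}_{g'}$, the same use of Lemmas \ref{lemahafa}, \ref{lemahamuchafa} and \ref{lema_util} to get $[I,J]=0$ and that $J$ is a graded ideal), but there is a genuine gap: you never establish ${\frak L}_1=0$, and this is exactly the pivot of the paper's proof. The paper splits into two cases. If ${\frak L}_1\neq 0$, then equation (\ref{eq1}) — which holds for \emph{every} nonzero graded ideal of ${\frak L}$ under the theorem's hypotheses — applied to the nonzero graded ideal $J$ gives ${\frak L}_1\subset J$, contradicting the definition (\ref{J}) of $J$; hence no proper nonzero graded ideal exists and ${\frak L}$ is gr-simple, which is how alternative (1) arises. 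Only in the case ${\frak L}_1=0$ does one get the decomposition of alternative (2). Your proposal skips this dichotomy, and the omission is load-bearing in two places. First, your claimed inheritance that $I$ satisfies ``$I_1$ generated by the brackets $[{\frak L}_g,{\frak L}_{g^{-1}}]$ restricted to $\Sigma_I$'' is \emph{equivalent} to ${\frak L}_1=0$: for $g\in\Sigma_I$ with $I$ proper, Lemma \ref{lema_util} forces $g^{-1}\notin\Sigma_I$ and then Lemma \ref{lemahamuchafa} gives $[{\frak L}_g,{\frak L}_{g^{-1}}]=0$, so $\sum_{g\in\Sigma_I}[{\frak L}_g,{\frak L}_{g^{-1}}]=0$ while $I_1={\frak L}_1$ by (\ref{eq1}). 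Second, the ``verbatim'' transfer of the gr-simplicity argument to $J$ breaks down if ${\frak L}_1\neq 0$: any nonzero graded ideal $\widetilde{J}$ of $J$ is (via $[I,J]=0$) a nonzero graded ideal of ${\frak L}$, so (\ref{eq1}) gives ${\frak L}_1\subset\widetilde{J}\subset J$, incompatible with $J\cap{\frak L}_1=0$; your argument then yields a contradiction rather than $\widetilde{J}=J$. Tracing that contradiction is precisely the missing step (it shows the existence of a proper ideal forces ${\frak L}_1=0$), but as written it is not in your proof.

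A secondary but real error: the assertion that ``by symmetry of $\Sigma_G$ the set $\Sigma_G\setminus\Sigma_I$ is closed under inverses'' is false. Combining (\ref{casifinal}) with Lemma \ref{lema_util}, \emph{exactly one} of $g,g^{-1}$ lies in $\Sigma_I$ for each $g\in\Sigma_G$; hence the inverse of every element of $\Sigma_G\setminus\Sigma_I$ lies in $\Sigma_I$, and neither $\Sigma_I$ nor $\Sigma_G\setminus\Sigma_I$ is symmetric. Consequently neither $I$ nor $J$ ``satisfies the same standing hypotheses'' as ${\frak L}$, since symmetry of the support is a standing assumption of the whole framework; wholesale inheritance is not available, which is why the paper instead extracts only the specific facts it needs ($\Sigma_I$-connectedness through connections contained in $\Sigma_I$, $\Sigma_I$-multiplicativity, ${\mathcal Z}_I(I)=0$, and $\dim{\frak L}_g=1$ for $g\in\Sigma_I$). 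Note that your first formulation can be salvaged more cheaply than by inheritance: once ${\frak L}_1=0$ is in hand, any nonzero graded ideal $\widetilde{I}$ of the summand $I$ is a graded ideal of ${\frak L}$, so (\ref{casifinal}) applies to it directly, and since ${\frak L}_{g^{-1}}\subset\widetilde{I}\subset I$ is impossible for $g\in\Sigma_I$, one gets ${\frak L}_g\subset\widetilde{I}$ for all $g\in\Sigma_I$, i.e.\ $\widetilde{I}=I$; but this shortcut still requires the ${\frak L}_1=0$ step you omitted.
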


\begin{proof}
Consider any nonzero graded ideal $I$ of ${\frak L}$. Observe that
from equation (\ref{semeolvida}) we can write $$I = {\frak L}_1
\oplus (\bigoplus\limits_{g \in \Sigma_I} {\frak L}_g)$$ where
$\Sigma_I := \{g \in \Sigma_G : {\frak L}_g \subset I\}$.

Suppose $\Sigma_I \neq \Sigma_G$. Denote as above
\begin{equation}\label{J}
J := \bigoplus \limits_{g' \in \Sigma_G \setminus \Sigma_I} {\frak
L}_{g'},
\end{equation}
being so $J \neq \{0\}$. Let us show that the graded subspace $J$
is a graded ideal of ${\frak L}$. We can write
$$[J,{\frak L}] = [\bigoplus\limits_{g \in
\Sigma_G\setminus \Sigma_I}{\frak L}_g, {\frak L}_1 \oplus
(\bigoplus\limits_{g' \in \Sigma_I}{\frak L}_{g'}) \oplus
(\bigoplus\limits_{g'' \in \Sigma_G \setminus \Sigma_I} {\frak
L}_{g''})] \subset$$
\begin{equation}\label{summands}
[\bigoplus\limits_{g \in \Sigma_G\setminus \Sigma_I}{\frak L}_g,
{\frak L}_1] + [\bigoplus\limits_{g\in \Sigma_G\setminus
\Sigma_I}{\frak L}_g, \bigoplus\limits_{g' \in \Sigma_I}{\frak
L}_{g'}] + [\bigoplus\limits_{g\in \Sigma_G\setminus
\Sigma_I}{\frak L}_g, \bigoplus\limits_{g'' \in \Sigma_G \setminus
\Sigma_I} {\frak L}_{g''}].
\end{equation}
Taking into account Lemma \ref{lemahafa}-(i), we get that the
first summand in equation (\ref{summands}) vanishes. Let us also
observe that Lemma \ref{lemahafa}-(iii) and Lemma
\ref{lemahamuchafa} give us $[\bigoplus\limits_{g\in
\Sigma_G\setminus \Sigma_I}{\frak L}_g, \bigoplus\limits_{g' \in
\Sigma_I}{\frak L}_{g'}] = 0$. That is, the second summand in
equation (\ref{summands}) is also null and so we have $[J,{\frak
L}] \subset [\bigoplus\limits_{g\in \Sigma_G\setminus
\Sigma_I}{\frak L}_g, \bigoplus\limits_{g'' \in \Sigma_G \setminus
\Sigma_I} {\frak L}_{g''}].$ Lemma \ref{lemahafa}-(ii) finally
shows $[J,{\frak L}] \subset J$ and we have that $J$ is a graded
ideal of ${\frak L}$.

Let us distinguish two cases. In the first one ${\frak L}_1 \neq
\{0\}$. Then equation (\ref{eq1}) gives us ${\frak L}_1 \subset J$
which contradicts equation (\ref{J}). Hence $\Sigma_I = \Sigma_G$
for any graded ideal $I$ of ${\frak L}$ and we conclude ${\frak
L}$ is gr-simple in the case ${\frak L}_1 \neq \{0\}$.

In the second one, ${\frak L}_1 = \{0\}$. We have showed above
that we can write
\begin{equation}\label{dosober}
\hbox{${\frak L} = I \oplus J$ with $J$ a nonzero graded ideal of
${\frak L}$ satisfying $[I,J] = 0$.}
\end{equation}
Now, we can prove the gr-simplicity of $I = {\frak L}_1 \oplus
(\bigoplus\limits_{g \in \Sigma_I} {\frak L}_g)$ by observing that
the $\Sigma_G$-multiplicativity of ${\frak L}$ and Lemma
\ref{lemahafa}-(iii) give us that $\Sigma_I$ has all of its
elements $\Sigma_I$-connected, that is, $\Sigma_G$-connected
through $\Sigma_G$-connections contained in $\Sigma_I$, and that
$I$ is $\Sigma_I$-multiplicative. We also have ${\mathcal Z}_I(I)
= 0, ({\mathcal Z}_I(I):= \{x \in I: [x,I] = 0\}$), as consequence
of equation (\ref{dosober}) and ${\mathcal Z}({\frak L}) = 0$, and
we clearly have $\dim {\frak L}_g = 1$ for any $g \in \Sigma_I$.
Since Lemma \ref{lema_util} gives us that in case $g \in \Sigma_I$
then necessarily $g^{-1} \notin \Sigma_I$, if we consider a
nonzero graded ideal ${\widetilde I}$ of $I$ we can argue with the
fact that $\Sigma_I$ has all of its elements $\Sigma_I$-connected,
the $\Sigma_I$-multiplicativity and the maximal length of $I$ as
usual to get ${\widetilde I} = I$.
% So, we
 %can argue  as in the beginning of this section with the
%$\Sigma_I$-multiplicativity of $I$ and  the fact $\dim {\frak
%L}_{g}=1$ for any $g\in \Sigma_I$, taking into account  Lemma
%\ref{lema1I},   to conclude that any nonzero graded ideal
%${\widetilde I}$ of $I$ is necessarily ${\widetilde I}=I$, (in
%case some $g,g^{-1} \in{ I}$, Lemma \ref{lema1I} gives us the
%possibility   $I={\frak L}$ and  $J=0$).
So $I$ is $gr$-simple. The same argument applies to show $J$ is
also a $gr$-simple graded ideal of ${\frak L}$.
\end{proof}

%%%%%%%%%%%%%%%%%%%%%%%%%%%%%%%%%%%%%%%%%%%%%%%%%%%%%%%%%%
%%%%%%%%%%%%%%%%%%%%%%%%%%%%%%%%%%%%%%%%%%%%%%%%%%%%%%%%%%%%%
%%%%%%%%%%%%%%%%%%%%%%%%%%%%%%%%%%%%%%%%%%%%%%%%%%%%%%%%%%

\medskip

It remains to study the case in which $|\Sigma_G|\leq 2$. We note
that it is straightforward to describe this class of graded Lie
superalgebras in a much more detailed way than we do in Lemma
\ref{cardinal2}. However, it is better for our purposes the
compact description given below.

\begin{lemma}\label{cardinal2}
Let ${\frak L}$ be  of maximal length, with ${\mathcal{Z}}({\frak
L}) = 0$ and satisfying ${\frak L}_1 = \sum\limits_{g \in
\Sigma_G}[{\frak L}_g,{\frak L}_{g^{-1}}]$. If $|\Sigma_G| \leq
2$, then one of the following assertions hold.

\begin{enumerate} \item[{\rm (1).}] ${\frak L}$ is a gr-simple Lie
superalgebra.

\item[{\rm (2).}] $|\Sigma_G| = 1$ and ${\frak L}$ is a
$(2+n)$-dimensional non gr-simple Lie superalgebra with $n \in
\{1,2,3\}$, graded as ${\frak L} = [{\frak L}_g,{\frak L}_g]
\oplus {\frak L}_g$ with ${\rm dim}({\frak L}_g) = 2$ and ${\rm
dim}([{\frak L}_g,{\frak L}_g]) = n$, for some $g \in G \setminus
\{1\}$ such that $g = g^{-1}$.

\item[{\rm (3).}] $|\Sigma_G| = 2$ and ${\frak L}$ is a
$3$-dimensional non gr-simple Lie superalgebra which decomposes as
${\frak L} = I \oplus J$, with $I=[{\frak L}_g,{\frak L}_{g^{-1}}]
\oplus {\frak L}_g$ a $2$-dimensional graded ideal of ${\frak L}$
and $J={\frak L}_{g^{-1}}$, for some $g \in G \setminus \{1\}$
such that $g \neq g^{-1}$.
% and where the product is given
%by
%$$\begin{tabular}{|c||c|c|c|}
  %\hline
     %  & $[{\frak L}_{g},{\frak L}_{g^{-1}}]$ & ${\frak L}_{g}$ & ${\frak L}_{g^{-1}}$ \\
 % \hline
 % \hline
 % $[{\frak L}_{g},{\frak L}_{g^{-1}}]$ & 0  & ${\frak L}_{g}$  &  0\\
 % ${\frak L}_{g}$ & * & 0 & $[{\frak L}_{g},{\frak L}_{g^{-1}}]$ \\
%  ${\frak L}_{g^{-1}}$  & * & * & 0 \\
%  \hline
%\end{tabular}$$

\item[{\rm (4).}] $|\Sigma_G| = 2$ and ${\frak L}$ is a
$(4+n)$-dimensional non gr-simple Lie superalgebra with $n \in
\{0,1,2,3,4\}$, graded as ${\frak L} = [{\frak L}_g,{\frak
L}_{g^{-1}}] \oplus {\frak L}_g \oplus {\frak L}_{g^{-1}}$ with
${\rm dim}({\frak L}_g) = {\rm dim}({\frak L}_{g^{-1}}) = 2$ and
${\rm dim}([{\frak L}_g,{\frak L}_{g^{-1}}]) = n$, for some $g \in
G \setminus \{1\}$ such that $g \neq g^{-1}$.
\end{enumerate}
\end{lemma}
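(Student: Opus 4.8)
The plan is to exhaust all possible configurations of $\Sigma_G$ when $|\Sigma_G| \leq 2$, using the symmetry of $\Sigma_G$, the maximal length hypothesis (which forces $\dim {\frak L}_g^{\bar i} \in \{0,1\}$, hence $\dim {\frak L}_g \in \{1,2\}$), and the standing assumptions ${\mathcal Z}({\frak L}) = 0$ and ${\frak L}_1 = \sum_{g \in \Sigma_G}[{\frak L}_g, {\frak L}_{g^{-1}}]$. First I would split on $|\Sigma_G| \in \{1,2\}$ and, within each, on whether the support element $g$ satisfies $g = g^{-1}$ or $g \neq g^{-1}$. The symmetry of $\Sigma_G$ is what makes this case analysis finite: if $|\Sigma_G| = 1$, symmetry forces $g = g^{-1}$; if $|\Sigma_G| = 2$, the two elements must be $g$ and $g^{-1}$ with $g \neq g^{-1}$, since any element of the support drags in its inverse.

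Next, within each case I would compute ${\frak L}_1$ explicitly using the hypothesis ${\frak L}_1 = \sum_{g\in\Sigma_G}[{\frak L}_g,{\frak L}_{g^{-1}}]$, and read off the total dimension. For $|\Sigma_G| = 1$ with $g = g^{-1}$ and $\dim {\frak L}_g = 2$, one gets ${\frak L} = [{\frak L}_g,{\frak L}_g] \oplus {\frak L}_g$; the bracket $[{\frak L}_g, {\frak L}_g]$ lands in ${\frak L}_{g^2} = {\frak L}_1$ and, computing over the $\Z_2$-grading, one sees $\dim [{\frak L}_g, {\frak L}_g] \in \{1,2,3\}$ (the four products $[{\frak L}_g^{\bar i}, {\frak L}_g^{\bar j}]$ span at most three independent lines after accounting for skew-supersymmetry constraints), giving conclusion (2). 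The case $\dim {\frak L}_g = 1$ collapses: a single one-dimensional homogeneous piece with $g = g^{-1}$ would make ${\frak L}_g$ either central or force $[{\frak L}_g,{\frak L}_g]=0$, and together with ${\mathcal Z}({\frak L})=0$ this is untenable unless ${\frak L}$ is already gr-simple, landing in (1). For $|\Sigma_G| = 2$, I would similarly split on $\dim {\frak L}_g \in \{1,2\}$: the value $1$ yields the $3$-dimensional algebra of (3), where one checks $I = [{\frak L}_g,{\frak L}_{g^{-1}}] \oplus {\frak L}_g$ is an ideal by verifying the brackets with ${\frak L}_{g^{-1}}$ close up, while $J = {\frak L}_{g^{-1}}$; the value $2$ gives the $(4+n)$-dimensional family of (4), with $n = \dim[{\frak L}_g,{\frak L}_{g^{-1}}] \in \{0,1,2,3,4\}$ since this bracket is spanned by the four products $[{\frak L}_g^{\bar i},{\frak L}_{g^{-1}}^{\bar j}]$.

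The hard part will be the careful bookkeeping that determines exactly which dimensions $n$ can occur and ruling out configurations that violate ${\mathcal Z}({\frak L}) = 0$: one must confirm that whenever a proposed algebra has nonzero center, it is excluded, and that the non gr-simple conclusions genuinely fail gr-simplicity by exhibiting the stated proper graded ideal. In case (3) I would verify directly that ${\frak L}_{g^{-1}}$ is a graded ideal complementary to $I$ (using that $[{\frak L}_{g^{-1}}, {\frak L}_{g^{-1}}] \subset {\frak L}_{g^{-2}}$, which is zero since $g^{-2} \notin \Sigma_G \cup \{1\}$ when $g \neq g^{-1}$ and $|\Sigma_G|=2$), and that $[I,J]$ closes into $J$. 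The main obstacle throughout is keeping track of the finitely many $\Z_2$-parity combinations $[{\frak L}_g^{\bar i}, {\frak L}_{g^{-1}}^{\bar j}]$ and $[{\frak L}_g^{\bar i}, {\frak L}_g^{\bar j}]$, since skew-supersymmetry imposes $[{\frak L}_g^{\bar 0},{\frak L}_g^{\bar 0}] = 0$ in the $g=g^{-1}$ case and symmetrizes the odd-odd bracket, which is precisely what caps the dimension counts at the stated values. Once these parity constraints are tabulated, each conclusion follows by direct inspection.
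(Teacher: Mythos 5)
Your case skeleton (split on $|\Sigma_G|$, on $g=g^{-1}$ versus $g\neq g^{-1}$, and on $\dim{\frak L}_g\in\{1,2\}$, then do parity bookkeeping) matches the paper's, but you are missing the device that actually drives the paper's proof: one assumes ${\frak L}$ is \emph{not} gr-simple, fixes a proper nonzero graded ideal $I$, and uses Lemma \ref{lema4} together with equation (\ref{idealpartio}) and maximal length to place some homogeneous piece $0\neq{\frak L}_g^{\bar i}\subset I$. Every structural constraint in items (2)--(4) is then extracted by playing products against $I$: any product that, if nonzero, would push a missing component into $I$ must vanish, since otherwise $I={\frak L}$. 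Your plan instead reads the structure off from the dimension data and proposes to exhibit the proper ideal afterwards; this cannot work, because the same dimension data is compatible with gr-simplicity. Concretely, ${\rm sl}_2$ with its Cartan grading (take $G=\Z$, ${\frak L}_1=\langle h\rangle$, ${\frak L}_g=\langle e\rangle$, ${\frak L}_{g^{-1}}=\langle f\rangle$, trivial odd part) satisfies every hypothesis of the lemma with $|\Sigma_G|=2$ and one-dimensional components, yet it is gr-simple; so ``the value $1$ yields the $3$-dimensional algebra of (3)'' is false as stated. What separates type (3) from ${\rm sl}_2$ is not the dimensions but whether $[{\frak L}_g,{\frak L}_g]$ and $[[{\frak L}_g,{\frak L}_{g^{-1}}],{\frak L}_{g^{-1}}]$ vanish, and the paper obtains these vanishings only from the ideal: e.g.\ if $[{\frak L}_g,{\frak L}_g]\neq 0$ then $[{\frak L}_g,{\frak L}_g]={\frak L}_{g^{-1}}\subset I$, forcing $I={\frak L}$, a contradiction.

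Two of your concrete verification steps for item (3) also fail. First, $J={\frak L}_{g^{-1}}$ is \emph{not} a graded ideal, and the lemma does not claim it is (only $I$ is an ideal; ${\frak L}=I\oplus J$ is a vector space decomposition): indeed $[J,{\frak L}_g]=[{\frak L}_{g^{-1}},{\frak L}_g]={\frak L}_1$, which is nonzero (this is forced by ${\mathcal Z}({\frak L})=0$) and lies in $I$, not in $J$; for the same reason $[I,J]$ closes into $I$, not into $J$. Second, your claim that $g^{-2}\notin\Sigma_G\cup\{1\}$ whenever $g\neq g^{-1}$ and $|\Sigma_G|=2$ overlooks the order-three case: if $g^{3}=1$ then $g^{-2}=g\in\Sigma_G$, so $[{\frak L}_{g^{-1}},{\frak L}_{g^{-1}}]\subset{\frak L}_g$ and $[{\frak L}_g,{\frak L}_g]\subset{\frak L}_{g^{-1}}$ are not excluded on support grounds; the paper kills these products with the ideal argument just described, not with group theory. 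Finally, a smaller point: your reduction of $|\Sigma_G|=2$ to $\Sigma_G=\{g,g^{-1}\}$ with $g\neq g^{-1}$ does not follow from ``the support drags in inverses,'' since a symmetric support can consist of two distinct self-inverse elements (two nontrivial elements of $\Z_2\times\Z_2$, say). The paper makes the same reduction without comment, so this is not a divergence from its proof, but the justification you offer is not a proof of it.
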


\begin{proof}
Suppose ${\frak L}$ is not gr-simple, then there exists a nonzero
graded ideal $I$ of ${\frak L}$ such that $I \neq {\frak L}$.

In the case $|\Sigma_G|=1$, we have that $\Sigma_G = \{g\}$ for
some $g \in G \setminus \{1\}$ such that $g = g^{-1}$. By Lemma
\ref{lema4}, equation (\ref{idealpartio}) and the maximal length
of ${\frak L}$ we have $0 \neq {\frak L}_g^{\bar i} \subset I$ for
some ${\bar i} \in {\hu Z}_2$. By the maximal length of ${\frak
L}$, see equation (\ref{dimen1o2}), we have two cases to consider.
In the first one ${\rm dim}({\frak L}_g) = 1$, being then ${\frak
L}_g ={\frak L}_g^{\bar i}$. In this case we can write ${\frak L}
= [{\frak L}_g^{\bar i},{\frak L}_g^{\bar i}] \oplus {\frak
L}_g^{\bar i}$. Since $0 \neq {\frak L}_g^{\bar i} \subset I$,
then $I = {\frak L}$, (being also $[{\frak L}_g^{\bar i}, {\frak
L}_g^{\bar i}] \neq 0$ as consequence of ${\mathcal{Z}}({\frak L})
= 0$), a contradiction. From here, necessarily the second case to
consider, in which ${\rm dim}({\frak L}_g) = 2$, happens. This
case clearly gives us item (2).

In the case $|\Sigma_G| = 2$, we have $\Sigma_G = \{g, g^{-1}\}$
for some $g \in G$ such that $g \neq g^{-1}$. We also have by
Lemma \ref{lema4}, equation (\ref{idealpartio}) and the maximal
length of ${\frak L}$ that either $0 \neq {\frak L}_g^{\bar i}
\subset I$ or $0 \neq {\frak L}_{g^{-1}}^{\bar i} \subset I$ for
some ${\bar i} \in {\hu Z}_2$. We can suppose without any loss of
generality that $0 \neq {\frak L}_g^{\bar i} \subset I$. By the
maximal length of ${\frak L}$, see equation (\ref{dimen1o2}), we
have again two cases to consider. In the first one ${\rm
dim}({\frak L}_g) = {\rm dim}({\frak L}_{g^{-1}}) = 1$, being then
${\frak L}_g = {\frak L}_g^{\bar i}$ and ${\frak L}_{g^{-1}} =
{\frak L}_{g^{-1}}^{\bar i}$ for some $\bar i \in {\hu Z}_2$. In
this case we can write ${\frak L} = [{\frak L}_g, {\frak
L}_{g^{-1}}] \oplus {\frak L}_g \oplus {\frak L}_{g^{-1}}$. Since
$0 \neq {\frak L}_g \subset I$, then $[{\frak L}_g, {\frak
L}_{g^{-1}}] \subset I$. As we also have $[{\frak L}_g, {\frak
L}_g] \subset {\frak L}_{g^2}$, necessarily $[{\frak L}_g, {\frak
L}_g] = 0$ or $[{\frak L}_g,{\frak L}_g] \subset {\frak
L}_{g^{-1}}$. From here, if $[{\frak L}_g,{\frak L}_g] \neq 0$,
then $0 \neq [{\frak L}_g,{\frak L}_g] = {\frak L}_{g^{-1}}
\subset I$ which would imply $I = {\frak L}$, a contradiction.
Hence $[{\frak L}_g,{\frak L}_g] = 0.$ Taking into account
${\mathcal{Z}}({\frak L}) = 0$, we get $[{\frak L}_g, {\frak
L}_{g^{-1}}] \neq 0$. From here ${\rm dim}({\frak L}) = 3$ and
${\rm dim}({I}) = 2$ being $I=[{\frak L}_g, {\frak L}_{g^{-1}}]
\oplus {\frak L}_g$. Moreover, $[[{\frak L}_g, {\frak
L}_{g^{-1}}],{\frak L}_{g^{-1}}] = 0$ since in the opposite case
${\frak L}_{g^{-1}} \subset I$ and so $I = {\frak L}$. This is
item (3) in the lemma. Finally, the second case to consider is the
one satisfying ${\rm dim}({\frak L}_g) = {\rm dim}({\frak
L}_{g^{-1}}) = 2$, which clearly gives rise to item (4).
\end{proof}

%Now we can state our main results in this section.

\begin{theorem}\label{teo4}
Let ${\frak L}$ be of maximal length, $\Sigma_G$-multiplicative,
with ${\mathcal{Z}}({\frak L}) = 0$ and satisfying ${\frak L}_1 =
\sum\limits_{g \in \Sigma_G}[{\frak L}_g,{\frak L}_{g^{-1}}]$.
Then $${\frak L} =(\bigoplus\limits_{k \in K} I_k) \oplus
(\bigoplus\limits_{q \in Q} I_q),$$ where any $I_k$ is a gr-simple
graded ideal of ${\frak L}$ having its $G$-support,
$\Sigma_{I_k}$, with all of the elements $\Sigma_{I_k}$-connected,
and any $I_q$ is one of the graded ideals (2)-(4) in Lemma
\ref{cardinal2} satisfying $[I_q,I_{q'}] = 0$ for any $q'\in Q$
with $q \neq q'$.
\end{theorem}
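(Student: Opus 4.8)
The plan is to assemble the final decomposition from the machinery already developed, treating the case $|\Sigma_G|>2$ and the case $|\Sigma_G|\le 2$ in parallel but ultimately combining them through the connection relation $\sim$ on $\Sigma_G$. First I would invoke Proposition~\ref{pro1} to partition $\Sigma_G$ into its equivalence classes under $\Sigma_G$-connection, so that we may write $\Sigma_G/\!\sim\,=\{[g]:g\in\Sigma_G\}$, and for each class associate the graded ideal $I_{[g]}:={\frak L}_{{\frak C}_g}$ furnished by Theorem~\ref{teo1}-1. Corollary~\ref{co1} already yields the decomposition ${\frak L}=\bigoplus_{[g]\in\Sigma_G/\sim}I_{[g]}$ as a direct sum of graded ideals with $[I_{[g]},I_{[g']}]=0$ when $[g]\neq[g']$, precisely because the present hypotheses include both ${\mathcal Z}({\frak L})=0$ and ${\frak L}_1=\sum_{g\in\Sigma_G}[{\frak L}_g,{\frak L}_{g^{-1}}]$. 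So the coarse structure comes for free; the work lies in analyzing each summand $I_{[g]}$ individually.

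Next I would verify that each $I_{[g]}$, regarded as a graded Lie superalgebra in its own right, inherits the four standing hypotheses. Concretely, the $G$-support of $I_{[g]}$ is exactly the class ${\frak C}_g$, whose elements are all $\Sigma_{I_{[g]}}$-connected (the connection stays inside the class by Lemma~\ref{C_g cerrado}); the maximal-length and $\Sigma_G$-multiplicativity properties restrict to $I_{[g]}$ since they are stated element-wise on the support; the condition $({I_{[g]}})_1=\sum_{g'\in{\frak C}_g}[{\frak L}_{g'},{\frak L}_{(g')^{-1}}]$ holds by the very definition of ${\frak L}_{{\frak C}_g,1}$ in equation~(\ref{suma16}); and the centre ${\mathcal Z}_{I_{[g]}}(I_{[g]})=0$ follows from ${\mathcal Z}({\frak L})=0$ together with the orthogonality $[I_{[g]},I_{[g']}]=0$, exactly as in the centre argument inside the proof of Theorem~\ref{lema4I}. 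Thus each $I_{[g]}$ is a graded Lie superalgebra satisfying all the hypotheses of either Theorem~\ref{lema4I} (when its support has cardinal $>2$) or Lemma~\ref{cardinal2} (when its support has cardinal $\le 2$).

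I would then apply the dichotomy to each summand. For a class with $|{\frak C}_g|>2$, Theorem~\ref{lema4I} says $I_{[g]}$ is either gr-simple or splits as a direct sum $I\oplus J$ of two gr-simple graded ideals; in the latter case I simply relabel the two gr-simple pieces as separate summands, noting their mutual product vanishes. For a class with $|{\frak C}_g|\le 2$, Lemma~\ref{cardinal2} tells us $I_{[g]}$ is either gr-simple or is one of the low-dimensional non-gr-simple algebras described in items (2)--(4); in case (3) it splits further as $I\oplus J$ with the gr-simple one-dimensional piece $J={\frak L}_{g^{-1}}$ peeled off, so I again break it into the appropriate indecomposable summands. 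Collecting every gr-simple summand so produced into an index set $K$ and every remaining low-dimensional non-gr-simple summand (the configurations (2),(4), and the non-split part of (3)) into an index set $Q$ gives the asserted form ${\frak L}=(\bigoplus_{k\in K}I_k)\oplus(\bigoplus_{q\in Q}I_q)$, with $[I_q,I_{q'}]=0$ for $q\neq q'$ inherited from the class-orthogonality of Corollary~\ref{co1} together with the internal orthogonality $[I,J]=0$ supplied by Theorem~\ref{lema4I} and Lemma~\ref{cardinal2}-(3).

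The main obstacle I anticipate is bookkeeping rather than a deep difficulty: one must be careful that the gr-simple graded ideals $I_k$ extracted from different equivalence classes are genuinely ideals of the whole ${\frak L}$ and not merely of their summand, which follows because each $I_{[g]}$ is an ideal of ${\frak L}$ (Theorem~\ref{teo1}-1) and any further decomposition of $I_{[g]}$ into orthogonal gr-simple ideals lifts to ideals of ${\frak L}$ precisely because $I_{[g]}$ annihilates every other summand. A second point requiring care is confirming the claim about the supports $\Sigma_{I_k}$: for the summands coming from Theorem~\ref{lema4I}, the gr-simple piece $I$ has support $\Sigma_I$ with all elements $\Sigma_I$-connected, as established at the end of that proof, and the same holds for $J$, so the connectedness assertion on each $\Sigma_{I_k}$ is already in hand. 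No new calculation beyond these verifications should be needed.
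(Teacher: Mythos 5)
Your overall strategy coincides with the paper's: decompose ${\frak L}=\bigoplus_{[g]\in\Sigma_G/\sim}I_{[g]}$ via Corollary~\ref{co1}, check that each $I_{[g]}$ inherits maximal length, $\Sigma_G$-multiplicativity, connectedness of its support and triviality of its center, then apply Theorem~\ref{lema4I} to the classes with $|[g]|>2$ and Lemma~\ref{cardinal2} to those with $|[g]|\leq 2$, and finally collect the gr-simple pieces into $K$ and the rest into $Q$. Those verifications are sound, and your use of Lemma~\ref{C_g cerrado} to see that a $\Sigma_G$-connection between elements of ${\frak C}_g$ stays inside ${\frak C}_g$ is a legitimate alternative to the paper's appeal to $\Sigma_G$-multiplicativity together with Lemma~\ref{lemahafa}-(iii).

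The genuine error is your treatment of case (3) of Lemma~\ref{cardinal2}. You split such a summand further as $I\oplus J$ with $J={\frak L}_{g^{-1}}$, call $J$ a ``gr-simple one-dimensional piece'' and place it in $K$. But $J$ is one-dimensional and abelian: $[J,J]\subset {\frak L}_{g^{-2}}\cap {\frak L}_{g^{-1}}=0$ since $g^{-2}\neq g^{-1}$, so $J$ fails the requirement $[{\frak L},{\frak L}]\neq 0$ in the paper's definition of gr-simple. Likewise the complementary piece $I=[{\frak L}_g,{\frak L}_{g^{-1}}]\oplus{\frak L}_g$ is not gr-simple (${\frak L}_g$ is a proper nonzero graded ideal of it, because $[{\frak L}_g,{\frak L}_g]=0$ and $[[{\frak L}_g,{\frak L}_{g^{-1}}],{\frak L}_g]\subset{\frak L}_g$), and it is not one of the configurations (2)--(4) either. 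Consequently the decomposition you end up with contains summands that belong to neither $K$ nor $Q$ in the sense required by the statement, so the conclusion as written is not established. The repair is immediate and is exactly what the paper does: do not split a class of type (3); keep the whole $3$-dimensional ideal $I_{[g]}=I\oplus J$ as a single summand $I_q\in Q$, since ``one of the graded ideals (2)--(4)'' refers to the entire algebra described in each item of Lemma~\ref{cardinal2}. With that correction the rest of your argument goes through and agrees with the paper's proof.
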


\begin{proof}
By Corollary \ref{co1}, ${\frak L} = \bigoplus\limits_{[g] \in
\Sigma_G/\sim} I_{[g]}$ is the direct sum of the graded ideals
$$I_{[g]} = {\frak L}_{{\frak C}_g,1} \oplus {V}_{{\frak C}_g}
= (\sum\limits_{g' \in {\frak C}_g}[{\frak L}_{g'}, {\frak
L}_{(g')^{-1}}]) \oplus (\bigoplus \limits_{g' \in {\frak C}_g}
{\frak L}_{g'}),$$ having any $I_{[g]}$ as $G$-support,
$\Sigma_{I_{[g]}}= [g]$. Observe that we can write
\begin{equation}\label{rompeendos}
{\frak L} =(\bigoplus\limits_{{\tiny \begin{array}{c}
  [h] \in \Sigma_G/\sim; \\
  |[h]| > 2 \\
\end{array}}} I_{[h]})
\oplus (\bigoplus\limits_{{\tiny \begin{array}{c}
   [h'] \in \Sigma_G/\sim;\\
   |[h']| \leq 2 \\
\end{array}}}
{I}_{[h']}).
\end{equation}
In order to apply Theorem \ref{lema4I} and Lemma \ref{cardinal2}
to each $I_{[h]}$ and $I_{[h']}$ respectively, observe that any
$I_{[g]}, [g] \in \Sigma_G/\sim$, is clearly of maximal length and
that ${\mathcal Z}_{I_{[g]}}(I_{[g]}) = 0$ as consequence of
$[I_{[g]},I_{[g']}] = 0$ if $[g] \neq [g']$, (Corollary
\ref{co1}), and ${\mathcal Z}({\frak L}) = 0$. Hence, if $|[g]|
\leq 2$ then Lemma \ref{cardinal2} gives us either $I_{[g]}$ is
gr-simple or is one of the graded ideals (2)-(4). In this context,
let us denote by
$$\hbox{$K_1=\{[g] \in \Sigma_G/\sim: |[g]|\leq 2$ and $I_{[g]}$ is
gr-simple $\}$}$$ and by
$$\hbox{$Q=\{[g] \in \Sigma_G/\sim: |[g]|\leq 2$ and $I_{[g]}$ is
one of the ideals (2)-(4) in Lemma \ref{cardinal2} $\}$}.$$ Then
we can write
\begin{equation}\label{penultimaaaa}
\bigoplus\limits_{{\tiny \begin{array}{c}
   [h'] \in \Sigma_G/\sim;\\
   |[h']| \leq 2 \\
\end{array}}}
{I}_{[h']} =(\bigoplus\limits_{[h^{\prime}] \in K_1} {I}_{[h']})
\oplus (\bigoplus\limits_{[h^{\prime}] \in Q} {I}_{[h']}).
\end{equation}
If $|\Sigma_{I_{[g]}}|= |[g]| > 2$, the
$\Sigma_G$-multiplicativity of ${\frak L}$ and Lemma
\ref{lemahafa}-(iii) show that $\Sigma_{I_{[g]}}$ has all of its
elements $\Sigma_{I_{[g]}}$-connected, that is,
$\Sigma_G$-connected through $\Sigma_G$-connections contained in
$\Sigma_{I_{[g]}}$, and that $I_{[g]}$ is
$\Sigma_{I_{[g]}}$-multiplicative. From here, Theorem \ref{lema4I}
let us deduce that either $I_{[g]}$ is gr-simple or $I_{[g]} =
I_{I_{[g]}} \oplus J_{I_{[g]}}$ with $I_{I_{[g]}}, J_{I_{[g]}}$
gr-simple graded ideals of $I_{[g]}$ satisfying
$[I_{I_{[g]}},J_{I_{[g]}}] = 0$. Now let us denote by
$$\hbox{$K_2 = \{[g] \in \Sigma_G/\sim: |[g]|> 2$ and $I_{[g]}$ is
gr-simple $\}$}$$ and by
$$\hbox{$K_3 = \{[g] \in \Sigma_G/\sim: |[g]|> 2$ and $I_{[g]} =
I_{I_{[g]}} \oplus J_{I_{[g]}}$ with $I_{I_{[g]}}, J_{I_{[g]}}$
gr-simple $\}$}.$$ We can assert
\begin{equation}\label{ultimaaaa}
\bigoplus\limits_{{\tiny \begin{array}{c}
   [h] \in \Sigma_G/\sim;\\
   |[h]| > 2 \\
\end{array}}}
{I}_{[h]} =(\bigoplus\limits_{[h] \in K_2} {I}_{[h]}) \oplus
(\bigoplus\limits_{[h] \in K_3} I_{I_{[h]}})\oplus
(\bigoplus\limits_{[h] \in K_3} J_{I_{[h]}}).
\end{equation}
Taking now into account equations (\ref{rompeendos}),
(\ref{penultimaaaa}) and (\ref{ultimaaaa}), we get the
decomposition
$${\frak L} = (\bigoplus\limits_{k \in K} I_k)
\oplus (\bigoplus\limits_{q \in Q} I_q),$$ where any $I_k$ is a
gr-simple graded ideal of ${\frak L}$ and any $I_q$ is one of the
graded ideals (2)-(4) in Lemma \ref{cardinal2}, satisfying
$[I_q,I_{q'}] = 0$ for any $q'\in Q$ with $q \neq q'$.
\end{proof}

\end{document}